\numberwithin{equation}{section}
\theoremstyle{plain}
\newtheorem{Proposition}[equation]{Proposition}
\newtheorem{Corollary}[equation]{Corollary}
\newtheorem*{Corollary*}{Corollary}
\newtheorem{Theorem}[equation]{Theorem}
\newtheorem*{Theorem*}{Theorem}
\newtheorem{Lemma}[equation]{Lemma}
\theoremstyle{definition}
\newtheorem{Example}[equation]{Example}
\setlist[enumerate]{leftmargin=*}
\setlist[itemize]{leftmargin=*}
\def\C{\mathbb{C}}
\def\R{\mathbb{R}}
\def\RR{\R}
\def\D{\mathbb{D}}
\def\T{\mathbb{T}}
\def\N{\mathbb{N}}
\def\Z{\mathbb{Z}}
\def\F{\mathcal {F}}
\def\Q{\mathcal{Q}}
\def\re{\mathop{\rm Re}\nolimits}
\def\Lap{\mathcal L} % Laplace transform
\renewcommand{\leq}{\leqslant}
\renewcommand{\geq}{\geqslant}
\renewcommand{\subset}{\subseteq}
\renewcommand{\phi}{\varphi}
\renewcommand{\vec}[1]{{\bf #1}}
\def\sech{\mathop{\rm sech}\nolimits}
\author[E. A. Gallardo-Guti\'errez]{Eva A. Gallardo-Guti\'errez}
\address{Eva A. Gallardo-Guti\'errez \newline
Departamento de An\'alisis Matem\'atico y Matem\'atica Aplicada,\newline
Facultad de Matem\'aticas,
\newline Universidad Complutense de
Madrid, \newline
Plaza de Ciencias 3, 28040 Madrid,  Spain
\newline
and Instituto de Ciencias Matem\'aticas ICMAT,
\newline Madrid,  Spain }
\email{eva.gallardo@mat.ucm.es}
\author[J. R. Partington]{Jonathan R. Partington}
\address{Jonathan R. Partington, \newline
School of Mathematics, \newline
University of Leeds, \newline
Leeds LS2 9JT, United Kingdom}
\email{J.R.Partington@leeds.ac.uk}
\author[W. T. Ross]{William T. Ross}
	\address{Department of Mathematics and Statistics, University of Richmond, Richmond, VA 23173, USA}
	\email{wross@richmond.edu}
\subjclass[2010]{Primary 47A15, 47A16, 47B15, 30H10}
\title[Hardy operators]{Hardy Operators: In the footsteps of Brown, Halmos, and Shields}
\date{\today}
\keywords{Hardy operator, Hardy space, inner function, Laguerre polynomials, Laguerre shift, frame vectors, commutant, Ces\`aro operator}
\thanks{First two authors are partially supported by Plan Nacional  I+D grant no. PID2022-137294NB-I00, Spain. First author also acknowledges support from 
the Spanish Ministry of Science and Innovation, through the ``Severo Ochoa Programme for Centres of Excellence in R\&D'' (CEX2019-000904-S \& CEX2023-
001347) and from the Spanish National Research Council, through the ``Ayuda extraordinaria a Centros de Excelencia Severo Ochoa'' (20205CEX001).}
\begin{document}

\begin{abstract}
This paper discusses the two classical Hardy operators $\mathcal{H}_{1}$ on $L^2(0, 1)$ and $\mathcal{H}_{\infty}$ on $L^2(0, \infty)$ initially studied by Brown, Halmos and Shields. Particular  emphasis is given to the construction of explicit cyclic and $*$-cyclic vectors in conjunction with a characterization of their invariant and reducing subspaces. We also provide a complete description of the frame vectors for $I - \mathcal{H}_{1}^{*}$.
\end{abstract}

\maketitle

\section{Introduction}

This paper explores the invariant subspaces as well as the cyclic and frame vectors for the two classical Hardy integral operators. The first, known as the 
{\em finite Hardy operator} $\mathcal{H}_1$, is defined on the Lebesgue space $L^2(0, 1)$ by the formula
$$(\mathcal{H}_1 f)(x) = \frac{1}{x} \int_{0}^{x} f(t) dt, \quad 0 < x < 1,$$ while the second, known as the {\em infinite Hardy operator} $\mathcal{H}_{\infty}$, is defined on $L^2(0, \infty)$ by
$$(\mathcal{H}_{\infty} g)(x) = \frac{1}{x} \int_{0}^{x} g(t) dt, \quad0 < x < \infty.$$
Although the formulas for these two operators are the same and have similar looking adjoint formulas 
$$(\mathcal{H}_1^{*} f)(x) = \int_{x}^{1} \frac{f(t)}{t} dt, \quad 0 < x < 1,$$
$$(\mathcal{H}^{*}_{\infty} g)(x) = \int_{x}^{\infty} \frac{g(t)}{t}dt, \quad 0 < x < \infty,$$
and even have the same operator norm equal to $2$ (stemming from Hardy's integral inequality), they act boundedly on different Hilbert spaces and have distinct properties.

In 1965,  Brown, Halmos, and Shields \cite{MR187085} proved that the spectrum of $\mathcal{H}_1$ is the closed disk
$\{z: |z - 1| \leq 1\}$
while the spectrum of $\mathcal{H}_{\infty}$ is its boundary
$ \{z: |z - 1| = 1\}.$
A more salient difference between $\mathcal{H}_1$ and $\mathcal{H}_{\infty}$, also from \cite{MR187085}, comes from their representations as shifts,  namely   $I - \mathcal{H}_1^{*}$ is unitarily equivalent to the unilateral shift $(S f)(z) = z f(z)$ on the classical Hardy space of the open unit disk $\D := \{z: |z| < 1\}$ (and denoted by $H^2$), while $I - \mathcal{H}_{\infty}^{*}$ is unitarily equivalent to the bilateral shift $(M f)(\xi) = \xi f(\xi)$ on the Lebesgue space $L^2(\T, m)$, where $\T := \{z: |z| = 1\}$ is the unit circle and $dm = d\theta/(2 \pi)$ is normalized Lebesgue measure on $\T$.
As a straightforward consequence,  $\mathcal{H}_1^{*}$ is a subnormal operator, while $\mathcal{H}_{\infty}$ is normal.

Accordingly, the invariant subspace structure for the operators $\mathcal{H}_1$ and $\mathcal{H}_{\infty}$ is essentially understood since, in the first case, the invariant subspaces of $\mathcal{H}_1$ correspond, via Beurling's theorem on the invariant subspaces of the shift on $H^2$, to the model spaces $(\Theta H^2)^{\perp}: = H^2 \ominus \Theta H^2$ (where $\Theta$ is an inner function), while in the second, via results of Wiener and Helson on the invariant subspaces of the bilateral shift, the invariant subspaces of $\mathcal{H}_{\infty}$ correspond to orthogonal complements of spaces of the form $q H^2$ (where $q$ is a  unimodular function on $\T$) or $\chi_{E} L^2(\T, m)$ (where $E$ is a measurable subset of $\T$). See \cite[Ch.~II]{MR0171178} for the details on this.

More recently, Agler and McCarthy \cite{MR4671411} have shown that the invariant subspaces of ${\mathcal H}_1$ can also be expressed as limits of sequences of finite dimensional spaces spanned by eigenfunctions for $\mathcal{H}_1$, namely the monomial functions $x \mapsto x^s$ for $s \in \C$ with $\re s > -\frac{1}{2}$, as so--called ``liminf spaces''.
This is in contrast to the related Volterra operator
$$f(x) \mapsto \int_{0}^{x} f(t) dt$$ on $L^2(0,1)$, which has no finite dimensional invariant subspaces (see, for example, \cite[Chap.~7]{MR4545809}). Indeed, every invariant subspace of the Volterra operator  takes the form $\chi_{[a, 1]} L^2(0, 1)$ for some $0 \leq a \leq 1$.

The above descriptions of the invariant subspaces of $\mathcal{H}_1$ and $\mathcal{H}_{\infty}$ rely either on the unitary operators involved in  identifying $I - \mathcal{H}_1^{*}$ with $S$ and $I - \mathcal{H}_{\infty}^{*}$ with $M$ or, in the somewhat implicit approach of Agler and McCarthy, as limits of functions via ``liminf spaces''. This paper starts  by giving an \emph{explicit} description of the invariant subspaces in terms of the functions in the spaces $L^2(0, 1)$ or $L^2(0, \infty)$ and not their realizations on $H^2$ or $L^2(\T, m)$. The ideas of Brown, Halmos and Shields, along with an explicit inversion formula for the Mellin transform  (Proposition \ref{inversion lemma}), allows us to characterize the reducing subspaces for $\mathcal{H}_{\infty}$ (i.e., those invariant for both $\mathcal{H}_{\infty}$ and $\mathcal{H}_{\infty}^{*}$) as 
$$ \left\{t \mapsto \int_{E} t^{- i x - \frac{1}{2}} f(ix) dx: f \in L^2(i\R)\right\}$$
for some measurable subset $E \subset \R$ (see Theorem \ref{reducingeweswefrT}). Note that $\mathcal{H}_1$ lacks reducing subspaces.

This, in turn, leads us to a characterization in Theorem \ref{prop:111} of the $\ast$-cyclic vectors for $\mathcal{H}_{\infty}$, i.e., those $f \in L^2(0, \infty)$ not contained in any nontrivial reducing subspace. Specifically, $f \in L^2(0, \infty)$ is $\ast$-cyclic for $\mathcal{H}_{\infty}$ if and only if  the function $$\beta \mapsto \int_{-\infty}^{\infty} f(e^{x}) e^{(i \beta + \frac{1}{2}) x}dx$$ does not vanish on any  subset of $\R$ with positive measure. Some particular classes (Proposition \ref{supporttstsaaa}) of $\ast$-cyclic vectors include all nontrivial $L^2(0,\infty)$ functions supported on $[0,a]$ or supported on  $[a,\infty)$, $0 < a < \infty$.

The study of the cyclic vectors (vectors not contained in any nontrivial  invariant subspace) for $\mathcal{H}_1^{*}$ and $\mathcal{H}_{\infty}^{*}$ is a complex undertaking as the provided criteria are difficult to verify. As particular instances, we show in  \S \ref{cyclkcic5} that the functions $t \mapsto t^{\alpha}$, where $\re \alpha > -\frac{1}{2}$, are cyclic vectors for $\mathcal{H}_1^{*}$ while the functions
$$\frac{1}{(t + 1)^n},\;  n  = 1, 2, \ldots,  \; \;  \frac{1}{\sqrt{t} (1 + (\log t)^2)}, \;  \; \mbox{and} \; e^{-t}$$
are cyclic for  both $\mathcal{H}_{\infty}$ and $\mathcal{H}^*_{\infty}$. In fact, $\mathcal{H}_{\infty}$ and $\mathcal{H}_{\infty}^{*}$ share the same cyclic vectors (Proposition \ref{share}).

This paper will also make connections to semigroups of composition operators on $L^2(0, \infty)$, the Laguerre shift, and, using our Mellin transform analysis, the commutants of  $\mathcal{H}_1$ and $\mathcal{H}_{\infty}$. These results will also help deminstrate the variety of interesting invariant subspaces for our two Hardy operators (see Examples \ref{bhsdf7gsdfds7f} and \ref{100ooOO00}).

The final section of the paper follows the work by Cabrelli, Molter, Paternostro, and Philipp \cite{MR4093918}
and characterizes the frame vectors for  a particular variation of $\mathcal{H}_1$. Here, for a bounded linear operator $T$ on separable Hilbert space $H$, a vector $\vec{v} \in H$ is a {\em frame vector} for $T$ if the sequence $(T^n \vec{v})_{n \geq 0}$ is a frame in that there are positive constants $c_1, c_2$ so that 
$$c_1 \|\vec{x}\|^2 \leq \sum_{n = 0}^{\infty} |\langle T^n \vec{v}, \vec{x}\rangle|^2 \leq c_2 \|\vec{x}\|^2 \; \; \mbox{for all $\vec{x} \in H$}.$$ In particular,
\cite[Thm.~3.3]{MR4093918} asserts that for $T$ to have frame vectors at all, 
is it necessary that $T^*$ is similar to a strongly stable contraction, in other words,
$$\lim_{n \to \infty} \|T^{*n}\vec{y}\| = 0 \; \; \mbox{ for all $\vec{y} \in H$.}$$
By the uniform boundedness principle, it follows that such $T$ must satisfy
$$\sup_{n \geq 0} \|T^n\| < \infty$$
and hence, in particular,  $\sigma(T) \subseteq \overline{\D} := \{z: |z| \leq 1\}$.
Thus, we cannot hope to find frame vectors for the Hardy operators $\mathcal{H}_1$ and $\mathcal{H}_{\infty}$, since, as just mentioned,  their spectra are not contained
in $\overline{\D}$. However, we will show in \S \ref{section frame} that $I-\mathcal{H}_1^*$ does possess frame vectors while neither $I-\mathcal{H}_1$ nor $I-\mathcal{H}_{\infty}$ do.
Moreover, frame vectors for $I - \mathcal{H}_{1}^{*}$ exist in abundance and we give plenty of specific examples as well as a general characterization (Corollary \ref{generalframeHH}). Along the way, we show there are no frame vectors for $I - \mathcal{C}$, where $\mathcal{C}$ is the classical (discrete) Ces\`{a}ro operator on $\ell^2$.

%%%%%%%%%%%%%%%%%%%%%%%
\section{Preliminaries}

In this section, we collect some preliminaries regarding the framework of this manuscript. In particular, we will prove Proposition \ref{inversion lemma} which will play a key role in our \emph{explicit} description of the invariant subspaces of the Hardy operators $\mathcal{H}_1$ and $\mathcal{H}_\infty$ on $L^2(0, 1)$ and $L^2(0, \infty)$, respectively.

\smallskip

An important tool in our analysis will involve $H^2(\C^{+})$, the {\em Hardy space of the right half plane}
$\C^{+}: = \{z: \re z > 0\}$. Here an analytic function  $f$ on $\C^{+}$ belongs to $H^2(\C^{+})$ when it satisfies the bounded integral means condition
$$\sup_{x > 0} \int_{-\infty}^{\infty} |f(x + i y)|^2 dy < \infty.$$
The square root of the above quantity defines the norm of $f$ and will be denoted by $\|f\|_{H^2(\C^{+})}$. Well known theory about Hardy spaces \cite{Garnett,  MR3890074, MR2158502} says that for each $f \in H^2(\C^{+})$, the ``radial limit''
$$f(iy) := \lim_{x \to 0^{+}} f(x + i y)$$ exists for almost every $y \in \R$ and this function belongs to $L^2(i\R)$ with
$$\|f\|_{H^2(\C^{+})}^{2}  = \int_{-\infty}^{\infty} |f(i y)|^2 dy.$$
We will use the notation  $H^{2}_{+}(i\R)$ for the set of radial boundary functions for $H^2(\C^{+})$ (which will be a closed subspace of $L^2(i\R)$). In a similar way, we  define the {\em Hardy space of the left half plane} $\C^{-} := \{z: \re z < 0\}$ consisting of all analytic functions $g$ on $\C^{-}$ with
$$\sup_{x < 0} \int_{-\infty}^{\infty} |g(x + i y)|^2 dy < \infty$$ with analogously defined boundary functions (with the radial limit as $x \to 0^{-}$). We denote the corresponding space of radial boundary functions by $H^{2}_{-}(i\R)$. One can verify that the linear transformation $C: L^2(i \R) \to L^2(i \R)$ defined by $(C f)(i x) = f(-ix)$ is a unitary operator  (and its own adjoint)  and satisfies 
\begin{equation}\label{Ceeee}
C H^{2}_{+}(i \R) = H^{2}_{-}(i \R).
\end{equation}

\smallskip

The Paley--Wiener theorem (see \cite[p.~146]{MR1864396} or  \cite[p.~203]{MR3890074}) says that the (normalized) \emph{Laplace transform}
\begin{equation}\label{LTT}
(\mathcal{L} f)(z) = \frac{1}{\sqrt{2 \pi}} \int_{0}^{\infty} e^{-u z} f(u) du, \quad z \in \C^{+},
\end{equation} is an isometric isomorphism from $L^2(0, \infty)$ onto $H^2(\C^{+})$. Moreover,  the corresponding Laplace transform
$$\frac{1}{\sqrt{2 \pi}} \int_{-\infty}^{0} e^{- u z} f(u) du, \quad z \in \C^{-},$$ is an isometric isomorphism from $L^2(-\infty, 0)$ onto $H^2(\C^{-})$. From here it follows that
\begin{equation}\label{***}
L^2(i \R) = H^{2}_{+}(i \R) \oplus H^{2}_{-}(i \R).
\end{equation}

\smallskip

Also important for our analysis will be the (modified) {\em Mellin transform}.  It is known (see \cite[p.~166]{MR1864396} or  \cite[p.~204]{MR3890074}) that the linear transformation 
$$f \mapsto  \frac{1}{\sqrt{2 \pi}} \int_{0}^{\infty} f(x) x^{is - \frac{1}{2}}dx, \quad -\infty < s < \infty,$$
defines an isometric  isomorphism between $L^2(0, \infty)$ and $L^2(\R)$. Thus, the operator $\mathcal{Q}$ defined by
$$(\mathcal{Q} f)(i \beta) = \frac{1}{\sqrt{2 \pi}} \int_{0}^{\infty} f(t) t^{i \beta - \frac{1}{2}} dt, \qquad -\infty<\beta<\infty,$$ 
is an isometric isomorphism between $L^2(0, \infty)$ and $L^2(i \R)$.
% with
%$$(\mathcal{Q}^{-1} f)(x) = \frac{1}{\sqrt{2 \pi}} \int_{-i \infty}^{i \infty} x^{-\frac{1}{2} - i s} f( s) ds.$$

The Mellin transform $\mathcal{Q}$ is often difficult to compute and even more difficult to invert. To help us along, we will make use of the following connection (implicitly discussed in \cite[p.~166]{MR1864396}) between $\mathcal{Q}$ and the Fourier transform $\mathcal{F}$ on $L^2(\R)$. Recall that $\mathcal{F}$ is initially defined on $L^1(\R)$ by
$$(\mathcal{F} f)(\alpha) = \frac{1}{\sqrt{2 \pi}} \int_{-\infty}^{\infty} e^{-i\alpha x} f(x) dx$$
and extended to be a unitary operator on $L^2(\R)$ with
$$(\mathcal{F}^{-1} f)(x) = \frac{1}{\sqrt{2 \pi}} \int_{-\infty}^{\infty} e^{i x \alpha} f(\alpha)d \alpha.$$

Here is an important tool to help invert $\mathcal{Q}$. We first introduce the isometric isomorphism $T: L^2(0,\infty) \to L^2(\R)$ defined by
\begin{equation}\label{TTTTisoisoms}
(Tf)(x)=f (e^x) e^{\frac{x}{2}}, \quad - \infty < x < \infty,
\end{equation}
noting, by the change of variables $t=e^x$, that
\[
\|Tf\|_{L^2(\R)}^2 = \int_{-\infty}^\infty |f(e^x)|^2 e^x \, dx = \int_{0}^\infty |f(t)|^2 \, dt = \|f\|_{L^2(\R)}^2.
\]
 One can then invert $T$ with the formula
\begin{equation}\label{TTTTinverse}
(T^{-1} g)(t)=\frac{g(\log t)}{\sqrt{t}}, \quad 0 < t < \infty.
\end{equation}

This yields  the following useful identity.

\begin{Proposition}\label{inversion lemma}
For each $f \in L^2(0,\infty)$, we have
$$(\mathcal{Q} f)(i\beta)=(\mathcal{F}^{-1} Tf )(\beta)$$  for almost every $\beta \in \R$.
\end{Proposition}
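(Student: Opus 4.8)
The plan is to verify the identity directly by unwinding the definitions of $\mathcal{Q}$, $T$, and $\mathcal{F}^{-1}$ and matching the resulting integrals via the substitution $t = e^x$. Since all three operators ($\mathcal{Q}$, $T$, and $\mathcal{F}^{-1}$) are isometric isomorphisms between the relevant $L^2$ spaces, it suffices to check the identity on a dense subclass of $L^2(0,\infty)$ — for instance, functions $f$ that are bounded with compact support in $(0,\infty)$ — where all integrals converge absolutely and manipulations (change of variables, Fubini if needed) are unproblematic; the general case then follows by continuity of both sides in the $L^2$ norm.

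First I would fix such a nice $f$ and write out $(Tf)(x) = f(e^x)e^{x/2}$, which is a bounded, compactly supported (hence $L^1 \cap L^2$) function of $x \in \R$. Then
\[
(\mathcal{F}^{-1}Tf)(\beta) = \frac{1}{\sqrt{2\pi}} \int_{-\infty}^{\infty} e^{i\beta x} (Tf)(x)\, dx = \frac{1}{\sqrt{2\pi}} \int_{-\infty}^{\infty} e^{i\beta x} f(e^x) e^{x/2}\, dx.
\]
Next I would perform the substitution $t = e^x$, so $x = \log t$, $dx = dt/t$, and $e^{i\beta x} e^{x/2} = t^{i\beta} t^{1/2} = t^{i\beta + 1/2}$. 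This turns the last integral into
\[
\frac{1}{\sqrt{2\pi}} \int_{0}^{\infty} f(t)\, t^{i\beta + \frac{1}{2}} \cdot \frac{dt}{t} = \frac{1}{\sqrt{2\pi}} \int_{0}^{\infty} f(t)\, t^{i\beta - \frac{1}{2}}\, dt = (\mathcal{Q}f)(i\beta),
\]
which is exactly the definition of the Mellin transform as given in the excerpt. This establishes the pointwise identity for every $\beta$ in the dense subclass.

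To pass to general $f \in L^2(0,\infty)$, I would invoke that $\mathcal{Q}\colon L^2(0,\infty) \to L^2(i\R)$ and $\mathcal{F}^{-1}T\colon L^2(0,\infty)\to L^2(\R) \cong L^2(i\R)$ are both bounded (indeed isometric), so the set on which $\mathcal{Q}f = \mathcal{F}^{-1}Tf$ holds in $L^2$ is closed; since it contains a dense set, it is everything. Equality in $L^2(i\R)$ then means equality almost everywhere in $\beta$, which is the assertion. Honestly I do not expect a genuine obstacle here — the only point requiring a modicum of care is the justification of the change of variables and the $L^1$ convergence of the Fourier integral, which is why I would phase the argument through a dense subclass rather than attempt to manipulate the (merely $L^2$, possibly non-absolutely-convergent) integrals directly; once that is handled, the density argument is routine.
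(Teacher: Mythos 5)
Your proof is correct and follows essentially the same route as the paper: the core step in both is the change of variables $t = e^{x}$ identifying the Mellin integral $\int_{0}^{\infty} f(t)\,t^{i\beta-\frac{1}{2}}\,dt$ with the Fourier integral $\int_{-\infty}^{\infty} e^{i\beta x}(Tf)(x)\,dx$. The paper simply performs this computation directly (leaving the a.e.\ interpretation for general $L^{2}$ functions implicit), whereas you add the routine density-plus-isometry argument to make that passage explicit; this is a fair point of rigor but not a different method.
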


\begin{proof}
By the same change of variables as above,
\begin{align*}
(\mathcal{Q} f)(i\beta) &= \frac{1}{\sqrt{2\pi}} \int_{-\infty}^\infty f(e^x) e^{(i\beta-\frac{1}{2})x}e^x \, dx\\
&= \frac{1}{\sqrt{2\pi}} \int_{-\infty}^\infty  e^{i\beta x} (Tf)(x) \, dx \\
&= (\mathcal{F}^{-1} T f) (\beta),
\end{align*}
which yields the desired formula.
\end{proof}

Thus, using \eqref{TTTTinverse},  $\mathcal{Q}^{-1} : L^2(i\R) \to L^2(0, \infty)$ is given by
\begin{align}\label{Mellintrack}
(\mathcal{Q}^{-1} g)(t) & =(T^{-1}\mathcal{F} g(i \cdot))(t)\\\nonumber
& = T^{-1} \Big(\frac{1}{ \sqrt{2 \pi}} \int_{-\infty}^{\infty} e^{-ixt} g(ix) dx\Big)\\\nonumber
& = \frac{1}{\sqrt{2 \pi}} \frac{1}{\sqrt{t}} \int_{-\infty}^{\infty} e^{-ix \log t} g(ix) dx\\\nonumber
& = \frac{1}{\sqrt{2 \pi}} \frac{1}{\sqrt{t}} \int_{-\infty}^{\infty} t^{-ix} g(ix)dx.
\end{align}

In a similar way as above, the isometric isomorphism
\begin{equation}\label{WWWWWW}
W: L^2(0, 1) \to L^2(0, \infty), \quad (W f)(u) = f(e^{-u}) e^{-\frac{u}{2}}
\end{equation}
satisfies
\[
(W^{-1}g) (t)=\frac{g(-\log t)}{\sqrt{t}}.
\]
Thus, for $f \in L^2(0, 1)$, a use of the Paley--Wiener theorem says that the function
$$z \mapsto \int_{0}^{\infty} (W f)(u) e^{-u z} du$$ on $\C^{+}$ belongs to $H^2(\C^{+})$. Moreover,
\begin{align}\label{sd98gsdf1111100}
 \int_{0}^{\infty} (W f)(u) e^{-u z} du & = \int_{0}^{\infty} f(e^{-u}) e^{-\frac{u}{2}} e^{-u z}du\\\nonumber
 & = \int_{0}^{1} f(t) t^{z - \frac{1}{2}} dt.
\end{align}
This shows that
\begin{equation}\label{s0f8oigfjdkafdWW}
\mathcal{Q} f = \mathcal{L} W f \; \; \mbox{for all $f \in L^2(0, 1)$},
\end{equation}
and thus
$$\mathcal{Q} L^2(0, 1) = H^{2}_{+}(i\R).$$
Since $H^{2}_{-}(i \R) = H^{2}_{+}(i \R)^{\perp}$ by \eqref{***} and $\mathcal{Q}$ is an isometric isomorphism, we also see that
\begin{equation}\label{kkKKhhKK}
\mathcal{Q} L^2(1, \infty) = H^{2}_{-}(i \R).
\end{equation}

\section{The Hardy operators as multiplication operators}\label{sectiosn333}

In this section, we realize $I - \mathcal{H}_{\infty}^{*}$ as a multiplication operator on $L^2(i\R)$. This was implicitly done in the Brown, Halmos, Shields paper \cite{MR187085} but we make this realization  explicit here since we need it  to appear in a certain form. In order to do this, we recall the Laguerre polynomials.

\subsection{The Laguerre polynomials} The {\em Laguerre polynomials} $(L_n)_{n \geq 0}$ are a sequence of polynomials which serve as an orthonormal basis for the weighted $L^2$ space $L^2((0, \infty), e^{-x}dx)$ in that
$$\int_{0}^{\infty} L_{m}(x) L_{n}(x) e^{-x} dx = \delta_{mn}$$ and $(L_n)_{n \geq 0}$ is complete in $L^2((0, \infty), e^{-x}dx)$. See \cite[Ch.~V]{MR372517} for the details.
Letting
$$V = I - \mathcal{H}_{\infty}^{*},$$ one can show by a direct integral calculation that $V$ is a unitary operator on $L^2(0, \infty)$. Here is a calculation which follows from properties of Laguerre polynomials  \cite{Kruglyak2006StructureOT}.

\begin{Lemma}\label{0a9srfgrdsvcxsd}
If $\chi$ denotes the characteristic function for $(0, 1)$, we have the following
\begin{enumerate}
\item[(a)] For each $n = 0, 1, 2, \ldots,$
$$(V^{n} \chi)(x) = L_{n}(-\log x) \chi(x)$$
and $(V^{n} \chi)_{n \geq 0}$ forms an orthonormal basis for $L^2(0, 1)$.
\item[(b)] For each $n = 0, 1, 2, 3 \ldots,$
$$(V^{* n} \chi)(x) = - \frac{L_{n}(\log x)}{x} (1 - \chi(x))$$
and $(V^{*n} \chi)_{n \geq 0}$ forms an orthonormal basis for $L^2(1, \infty)$.
\end{enumerate}
\end{Lemma}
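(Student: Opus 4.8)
The plan is to reduce everything to well-known properties of the Laguerre polynomials under the Laplace transform, using the identification of $V = I - \mathcal{H}_\infty^*$ with a multiplication operator that the preceding section has set up. The starting point is the classical fact (see \cite{MR372517}) that the Laplace transform of $x \mapsto L_n(x)e^{-x}\chi_{(0,\infty)}(x)$ is, up to normalization, $z \mapsto \frac{(z-1)^n}{(z+1)^{n+1}}$; equivalently, under the unitary $\mathcal{L}\colon L^2(0,\infty)\to H^2(\C^+)$, the orthonormal basis of normalized Laguerre functions is sent to the orthonormal basis $\bigl(\tfrac{1}{\sqrt\pi}\tfrac{(z-1)^n}{(z+1)^{n+1}}\bigr)_{n\ge0}$ of $H^2(\C^+)$, and the forward shift $f\mapsto \tfrac{z-1}{z+1}f$ on $H^2(\C^+)$ advances the index by one. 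So the first step is to record precisely how $V$ becomes this multiplication operator and to pin down $V\chi$, i.e. the image of the $n=0$ Laguerre function: one checks by a direct integral computation that $(V\chi)(x)=L_1(-\log x)\chi(x)$ where $L_1(t)=1-t$ (this is exactly the ``direct integral calculation'' alluded to just before the Lemma), or alternatively one verifies $\mathcal{Q}\chi$ or $\mathcal{L}W\chi$ equals the appropriate reproducing-type kernel and that applying $V$ multiplies it by $\tfrac{z-1}{z+1}$.

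For part (a), once we know $V$ corresponds, under the unitary $W$ of \eqref{WWWWWW} composed with $\mathcal{L}$ (so that $L^2(0,1)\xrightarrow{W}L^2(0,\infty)\xrightarrow{\mathcal{L}}H^2(\C^+)$), to multiplication by $b(z):=\tfrac{z-1}{z+1}$, and once we identify $\chi\in L^2(0,1)$ with the function $t\mapsto t^{-1/2}$ transported to the constant-free Laguerre ground state, the formula $(V^n\chi)(x)=L_n(-\log x)\chi(x)$ follows by induction from the three-term recurrence for Laguerre polynomials together with the recurrence satisfied by the iterates $b^n$ under multiplication — indeed the substitution $x=e^{-u}$ turns $V$ into the operator whose action on $g(u)=f(e^{-u})e^{-u/2}$ is governed exactly by the Laguerre differential/recurrence structure, so $V^n$ acting on the ground state produces $L_n$ evaluated at $u=-\log x$. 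That $(V^n\chi)_{n\ge0}$ is an orthonormal basis of $L^2(0,1)$ is then immediate: $V$ is unitary on $L^2(0,\infty)$ and restricts to an isometry of $L^2(0,1)$ into itself (since $\mathcal{H}_\infty^*\chi$ is supported in $(0,1)$, as one sees from the adjoint formula), the vectors $L_n(-\log x)\chi(x)$ are orthonormal because $L_n$ are orthonormal in $L^2((0,\infty),e^{-u}du)$ and the change of variable $x=e^{-u}$ carries $dx$ on $(0,1)$ to $e^{-u}du$ on $(0,\infty)$, and completeness transfers from completeness of the Laguerre system via the same change of variables.

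Part (b) is the ``mirror image'': $V^*=I-\mathcal{H}_\infty$, and the adjoint Hardy operator formula shows $\mathcal{H}_\infty\chi$ is supported in $(1,\infty)$ — more precisely $(V^*\chi)(x)=-\tfrac{L_1(\log x)}{x}(1-\chi(x))$ by a direct computation of $(I-\mathcal{H}_\infty)\chi$, giving the $n=1$ case. Then one proceeds exactly as in (a): under the unitary carrying $L^2(1,\infty)$ to $L^2(0,\infty)$ via $x=e^{u}$ (the analogue of $W$, essentially $T^{-1}$ restricted appropriately), $V^*$ restricted to functions supported on $(1,\infty)$ becomes the \emph{backward} shift structure or rather another multiplication by $b(z)$ on the corresponding $H^2$, and the iterates $(V^{*n}\chi)(x)$ pick up $L_n(\log x)$; the sign and the $1/x$ factor are exactly what the change of variables $x=e^u$ (sending $dx$ on $(1,\infty)$ to $e^u\,du$, matching the Laguerre weight after accounting for the $x^{-1}=e^{-u}$) demands, and orthonormality plus completeness in $L^2(1,\infty)$ follow from the Laguerre system as before. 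Alternatively, one can deduce (b) from (a) by a symmetry: the unitary $J$ on $L^2(0,\infty)$ given by $(Jf)(x)=\tfrac1x f(1/x)$ conjugates $\mathcal{H}_\infty$ to $\mathcal{H}_\infty^*$ and swaps $(0,1)$ with $(1,\infty)$, sending $\chi$ to $x\mapsto\tfrac1x(1-\chi(x))$; chasing $V^n\chi$ through $J$ then yields the stated formula for $V^{*n}\chi$ directly, which is probably the cleanest route.

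I expect the main obstacle to be not any single hard estimate but the bookkeeping of the several unitary identifications — matching $\chi$ to the correct Laguerre ground state with the right normalization so that the shift $b(z)=\tfrac{z-1}{z+1}$ advances the Laguerre index by exactly one, and keeping the changes of variables $x=e^{-u}$ (for part (a)) versus $x=e^{u}$ (for part (b)) straight so that the Jacobian reproduces the $e^{-u}du$ Laguerre weight and accounts for the extra $1/x$ and the sign in (b). Once the $n=0$ (resp. $n=1$) base case is correctly normalized, everything else is the Laguerre three-term recurrence combined with $b^{n+1}=b\cdot b^n$, and the orthonormal-basis claims are free from the corresponding property of the Laguerre polynomials.
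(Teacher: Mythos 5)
The paper itself gives no proof of this lemma (it simply cites properties of Laguerre polynomials), so there is no argument to compare yours against line by line; your overall strategy --- verify the base case by a direct integral computation, run an induction using the Laguerre identity, and get orthonormality and completeness from the change of variables $x=e^{\mp u}$ carrying Lebesgue measure to the Laguerre weight $e^{-u}\,du$ --- is exactly the natural one, and for part (a) it is precisely what the paper carries out later in Example \ref{ex:mathica}. Two small corrections to part (a): the identity driving the induction is $L_{n+1}(w)=L_n(w)-\int_0^w L_n(s)\,ds$ (equivalently, multiplication by the inner function on the Laplace side), not the three-term recurrence; and the Möbius multiplier attached to the orthonormal Laguerre functions $e^{-x/2}L_n(x)$ is $\phi(z)=(z-\tfrac12)/(z+\tfrac12)$ as in \eqref{eq:ftln}, not $(z-1)/(z+1)$. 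Neither of these affects the structure of your argument. Your alternative route to (b) via the reflection $(Jf)(x)=\tfrac1x f(1/x)$, which conjugates $\mathcal{H}_\infty$ to $\mathcal{H}_\infty^*$ and sends $\chi$ to $\tfrac1x(1-\chi)$, is correct and is indeed the cleanest way to deduce (b) from (a).

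There is, however, a genuine error in your base case for (b), and it matters. A direct computation gives, for $x>1$, $((I-\mathcal{H}_\infty)\chi)(x)=0-\tfrac1x\int_0^1 dt=-\tfrac1x$, so
$$(V^{*}\chi)(x)=-\frac{L_{0}(\log x)}{x}\,(1-\chi(x)),$$
not $-\tfrac{L_{1}(\log x)}{x}(1-\chi(x))$ as you assert ($L_1(t)=1-t$, so these differ). The induction --- or equivalently chasing $V^n\chi$ through your reflection $J$ --- then yields $V^{*(n+1)}\chi=-\tfrac{L_n(\log x)}{x}(1-\chi)$, i.e.\ the index in part (b) is shifted by one relative to part (a). Indeed the formula as printed in the Lemma already fails at $n=0$, where it would assert $\chi=-\tfrac1x(1-\chi)$, and the orthonormal basis of $L^2(1,\infty)$ is $(V^{*n}\chi)_{n\geq 1}$, since $V^{*0}\chi=\chi$ lives in $L^2(0,1)$. (The paper's own computation in \S\ref{sectiosn333}, where $I-\mathcal{H}_\infty^{*}$ sends $-L_n(\log x)(1-\chi)/x$ to $-L_{n-1}(\log x)(1-\chi)/x$, confirms this indexing.) So you cannot simply claim the ``direct computation gives the $n=1$ case''; it does not. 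You should either correct the exponent to $V^{*(n+1)}$ throughout part (b), or flag the off-by-one in the statement; as written, your proof asserts a false identity at its base case and the induction then proves the wrong formula.
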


Now consider the  function
\begin{equation}\label{phiiiii}
\phi(s) = \frac{s - \tfrac{1}{2}}{s + \tfrac{1}{2}}
\end{equation}
and observe that $\phi$ maps the right half plane  $\C^{+}$ conformally onto $\D$ and $|\phi(iy)| = 1$ for all $y\in\R$.
This induces an isometric isomorphism $J: L^2(\T, m) \to L^2(i\R)$ defined by
\begin{equation}\label{eq:Jh2h2}
(Jf)(s)= \frac{1}{\sqrt{2 \pi}} \frac{1}{s+\frac12} f(\phi(s)), \quad s \in i\R,
\end{equation}
which restricts to an  isometric isomorphism from $H^2$ (the Hardy space of the disk) onto $H^2(\C^+)$ (the Hardy space of the right half plane). Observe that  the multiplication operator
\begin{equation}\label{mspshsiii}
M_{\phi}: L^2(i\R) \to L^2(i\R), \quad M_{\phi} f = \phi f,
\end{equation} is unitary and, moreover, $M_{\phi} H^{2}_{+}(i\R) \subset H^{2}_{+}(i\R)$. A calculation will verify that 
$$J^{-1} M_\phi J$$ is the bilateral shift on $L^2(\T, m)$, which allows us to
carry the Beurling--Lax and Wiener invariant subspace theorems over to
$L^2(i\R)$ and $H^2(\C^+)$.

Using the well-known fact that $(z^n)_{n \geq 0}$ forms an orthonormal basis for $H^2$, along with the fact that $J$ is an isometric isomorphism, we see that the functions
\begin{equation}\label{s09dfgioaVHFJKRFG}
u_{n}(i \beta) :=  \frac{1}{\sqrt{2 \pi}} \Big(\frac{i \beta - \frac{1}{2}}{i \beta + \frac{1}{2}}\Big)^{n} \frac{1}{i \beta + \frac{1}{2}}, \quad n = 0, 1, 2, \cdots,
\end{equation}
form an orthonormal basis for $H_{+}^2(i \R)$ and that $M_{\phi} u_n = u_{n + 1}$.
The following connects  $\mathcal{Q}$ with the Laguerre polynomials and the orthonormal basis $(u_n)_{n \geq 0}$. 

Since the Laguerre polynomials $L_{n}(x)$ for $n = 0, 1, 2, \ldots,$ can be written as
\begin{equation}\label{eq:Ln}
L_n(x)= \frac{1}{n!} \left( \frac{d}{dx}-1\right)^n x^n,\
\end{equation}
the Laplace transform formula yields that
\[
\int_0^\infty e^{-xs} x^n \, dx = \frac{n! }{s^{n+1}}, \quad s \in \C^{+}.
\]
Thus, we conclude from \eqref{eq:Ln} that
\[
\int_0^\infty e^{-xs} L_n(x) \, dx =  \frac{(s-1)^n }{s^{n+1}}
\]
and so
\[
\int_0^\infty e^{-xs} e^{-\frac{x}{2}} L_n(x) \, dx =  \frac{(s-\tfrac{1}{2})^n}{(s+\tfrac{1}{2})^{n+1}}.
\]
We may rewrite this as a Fourier transform, taking $s=i\omega$, and then
\begin{equation}\label{eq:ftln}
\frac{1}{\sqrt{2\pi}}\int_0^\infty e^{-i\omega x s} e^{-\frac{x}{2}} L_n(x) \, dx = \frac{1}{\sqrt{2\pi}} \frac{(i\omega-\tfrac{1}{2})^n }{(i\omega+\tfrac{1}{2})^{n+1}}.
\end{equation}

\begin{Proposition}
For each $n = 0, 1, 2, \ldots,$ we have
$$(\mathcal{Q} L_{n}(- \log x) \chi )(i \beta)= u_n(i\beta).$$
\end{Proposition}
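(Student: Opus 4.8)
The plan is to connect the Mellin transform $\mathcal{Q}$ directly to the Laplace transform $\mathcal{L}$ via the map $W$ of \eqref{WWWWWW}, exactly as encoded in the identity \eqref{s0f8oigfjdkafdWW}, namely $\mathcal{Q}f = \mathcal{L}Wf$ for $f \in L^2(0,1)$. Since $L_n(-\log x)\chi(x)$ is supported in $(0,1)$, this is the natural identity to apply. First I would compute $W(L_n(-\log x)\chi)$: by the defining formula $(Wf)(u) = f(e^{-u})e^{-u/2}$, substituting $f(x) = L_n(-\log x)\chi(x)$ gives $(W(L_n(-\log x)\chi))(u) = L_n(u)e^{-u/2}$ for $u > 0$ (the characteristic function becomes trivial since $e^{-u} \in (0,1)$ for all $u>0$). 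Thus the claim reduces to computing $\mathcal{L}(e^{-u/2}L_n(u))$ and checking it equals $u_n$.

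Next I would invoke the normalized Laplace transform \eqref{LTT}, which carries an extra factor $\tfrac{1}{\sqrt{2\pi}}$, and the already-established computation preceding the proposition: from \eqref{eq:Ln} and the elementary identity $\int_0^\infty e^{-xs}x^n\,dx = n!/s^{n+1}$ one derives $\int_0^\infty e^{-xs}L_n(x)\,dx = (s-1)^n/s^{n+1}$ and hence
\[
\int_0^\infty e^{-xs} e^{-x/2} L_n(x)\,dx = \frac{(s-\tfrac12)^n}{(s+\tfrac12)^{n+1}}.
\]
Therefore $(\mathcal{L}W(L_n(-\log x)\chi))(s) = \tfrac{1}{\sqrt{2\pi}}\,\dfrac{(s-\tfrac12)^n}{(s+\tfrac12)^{n+1}}$, valid for $s \in \C^+$, and by taking the boundary values $s = i\beta$ this is precisely
\[
\frac{1}{\sqrt{2\pi}}\Big(\frac{i\beta-\tfrac12}{i\beta+\tfrac12}\Big)^n\frac{1}{i\beta+\tfrac12} = u_n(i\beta),
\]
matching \eqref{s09dfgioaVHFJKRFG}. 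Combining with $\mathcal{Q}f = \mathcal{L}Wf$ finishes the proof.

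There is essentially no hard step here; the proof is a bookkeeping exercise in chaining together \eqref{s0f8oigfjdkafdWW}, the substitution defining $W$, and the Laplace transform formula \eqref{eq:ftln}. The one point that requires a moment's care is confirming that the normalization constants line up: the $\tfrac{1}{\sqrt{2\pi}}$ in the definition of $\mathcal{Q}$ matches the one in the definition of $\mathcal{L}$, so no spurious factor appears. A second minor point is the passage from analytic functions on $\C^+$ to their boundary functions on $i\R$ — but since both sides are (boundary values of) $H^2(\C^+)$ functions and agree throughout $\C^+$, they agree a.e.\ on $i\R$, which is all that is needed since equalities in $L^2(i\R)$ are understood a.e. One could equivalently argue via Proposition \ref{inversion lemma} and a Fourier-transform computation using \eqref{eq:ftln} directly, but routing through $W$ and the Paley--Wiener theorem is cleaner.
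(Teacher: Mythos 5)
Your proof is correct and follows essentially the same route as the paper: the paper performs the substitution $x = e^{-u}$ directly (which is exactly the computation behind the identity $\mathcal{Q}f = \mathcal{L}Wf$ in \eqref{s0f8oigfjdkafdWW} that you invoke) and then applies \eqref{eq:ftln} at $s = i\beta$. Your version merely makes the bookkeeping through $W$ and $\mathcal{L}$ explicit; there is no substantive difference.
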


\begin{proof}
An integral substitution in $(\mathcal{Q} L_{n}(- \log x) \chi )(i \beta)$ will yield
$$(\mathcal{Q} L_{n}(- \log x) \chi )(i \beta) = \frac{1}{\sqrt{2 \pi}} \int_{0}^{\infty} e^{-i\beta u} e^{-\frac{u}{2}} L_{n}(u) du.$$
Now use \eqref{eq:ftln}.
\end{proof}

Furthermore, as observed earlier in Lemma \ref{0a9srfgrdsvcxsd}, noting that
\begin{equation}\label{redaskk}
(I - \mathcal{H}_{\infty}^{*})|_{L^2(0, 1)} = I - \mathcal{H}_1^{*},
\end{equation}
we see that 
$$(I - \mathcal{H}_1^{*}) L_{n}(-\log x) \chi = L_{n + 1}(-\log x) \chi,$$ and thus
\begin{equation}\label{eq:h1equiv}
\mathcal{Q} (I - \mathcal{H}_1^{*}) f = M_{\phi} \mathcal{Q} f, \quad f \in L^2(0, 1).
\end{equation}
We summarize this with the following proposition.

\begin{Proposition}\label{98rgegbrfewrfvFF}
$I - \mathcal{H}_1^{*}$ on $L^2(0, 1)$ is unitarily equivalent to $M_{\phi}$ on $H^2_{+}(i\R)$.
\end{Proposition}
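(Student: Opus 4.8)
The plan is to assemble the proposition directly from the ingredients already laid out in this section, so the ``proof'' is mostly a matter of recording that the pieces fit. First I would recall that $\mathcal{Q}\colon L^2(0,1)\to H^2_+(i\R)$ is an isometric isomorphism: this is exactly the content of the identity $\mathcal{Q} L^2(0,1)=H^2_+(i\R)$ established via \eqref{s0f8oigfjdkafdWW} together with the fact that $\mathcal{Q}$ is an isometry on all of $L^2(0,\infty)$ (and $L^2(0,1)$ sits inside $L^2(0,\infty)$). So $\mathcal{Q}$ is a candidate for the unitary implementing the equivalence, and nothing further needs to be checked about it.

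Next I would invoke the intertwining relation \eqref{eq:h1equiv}, namely $\mathcal{Q}(I-\mathcal{H}_1^*)f = M_\phi \mathcal{Q} f$ for all $f\in L^2(0,1)$. Combined with the previous paragraph, this says precisely that $\mathcal{Q}$ conjugates $I-\mathcal{H}_1^*$ on $L^2(0,1)$ to $M_\phi$ acting on $\mathcal{Q} L^2(0,1)=H^2_+(i\R)$. That is the statement, so the argument is essentially: ``combine \eqref{s0f8oigfjdkafdWW}, the isometry of $\mathcal{Q}$, \eqref{***}, and \eqref{eq:h1equiv}.'' One should note in passing that the restriction of $M_\phi$ to $H^2_+(i\R)$ is a genuine operator on that space, which is guaranteed by the observation $M_\phi H^2_+(i\R)\subseteq H^2_+(i\R)$ recorded after \eqref{mspshsiii}.

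I would also spell out why \eqref{eq:h1equiv} holds, since the excerpt asserts it somewhat telegraphically: by Lemma \ref{0a9srfgrdsvcxsd}(a) the vectors $(L_n(-\log x)\chi)_{n\ge 0}$ form an orthonormal basis for $L^2(0,1)$, and \eqref{redaskk} together with part (a) gives $(I-\mathcal{H}_1^*)L_n(-\log x)\chi = L_{n+1}(-\log x)\chi$; meanwhile the preceding Proposition gives $\mathcal{Q}(L_n(-\log x)\chi) = u_n$ and we know $M_\phi u_n = u_{n+1}$. Hence $\mathcal{Q}(I-\mathcal{H}_1^*)$ and $M_\phi \mathcal{Q}$ agree on a basis, so they agree everywhere by boundedness and linearity. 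This basis computation is the only place any real work happens, and it has already been done above, so the proof is a short synthesis.

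The main obstacle, such as it is, is purely expository rather than mathematical: making sure the reader sees that ``$M_\phi$ on $H^2_+(i\R)$'' means the compression/restriction of the multiplication operator to the invariant subspace $H^2_+(i\R)$, and that this restriction is unitarily equivalent (via $J^{-1}$, as noted after \eqref{mspshsiii}) to the unilateral shift $S$ on $H^2$ — which is the punchline connecting back to Brown, Halmos, and Shields. I would close by remarking that, consequently, $I-\mathcal{H}_1^*$ is unitarily equivalent to the unilateral shift, recovering the classical fact, with $\mathcal{Q}$ (equivalently $J^{-1}\mathcal{Q}$) giving an explicit implementing unitary.
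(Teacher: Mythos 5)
Your proposal is correct and takes essentially the same route as the paper: the proposition there is stated as a summary of precisely the computation you reconstruct, namely that $(L_n(-\log x)\chi)_{n\geq 0}$ is an orthonormal basis of $L^2(0,1)$ sent by $\mathcal{Q}$ to the basis $(u_n)_{n\geq 0}$ of $H^2_+(i\R)$, with $(I-\mathcal{H}_1^*)$ and $M_\phi$ acting as the shift on the respective bases, so the intertwining \eqref{eq:h1equiv} holds on a dense span and hence everywhere, with $\mathcal{Q}|_{L^2(0,1)}$ the implementing unitary by \eqref{s0f8oigfjdkafdWW}. Nothing further is needed.
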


Using the substitution $z \mapsto - z$, which transforms $H^2(\C^{+})$ onto $H^2(\C^{-})$ (recall \eqref{Ceeee}), we conclude that the functions
\begin{equation}\label{vvvveee}
v_{n}(i\beta) :=  i \sqrt{\frac{2}{\pi}} \Big(\frac{2 \beta - i}{2 \beta + i}\Big)^{n} \frac{1}{2 \beta + i}, \quad n = 0, 1, 2, \ldots,
\end{equation} form an orthonormal basis for $H_{-}^{2}(i\R)$. Another connection that the Mellin transform  $\mathcal{Q}$ makes with the Laguerre polynomials is the following.

\begin{Proposition}
For each $n = 0, 1, 2, \ldots$ we have
$$(\mathcal{Q} (-\frac{ L_{n}(\log x)}{x} (1 - \chi)))(i \beta) = v_{n}(i \beta).$$
\end{Proposition}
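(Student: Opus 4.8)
The plan is to mirror the proof of the previous proposition, which computed $(\mathcal{Q} L_n(-\log x)\chi)(i\beta) = u_n(i\beta)$, but now on the complementary part of the line $(1,\infty)$ instead of $(0,1)$. First I would apply the change of variables $t = e^{u}$ (equivalently the isometry $T$ of \eqref{TTTTisoisoms}) inside the Mellin integral
\[
\left(\mathcal{Q}\left(-\frac{L_n(\log x)}{x}(1-\chi)\right)\right)(i\beta) = \frac{1}{\sqrt{2\pi}} \int_{0}^{\infty} \left(-\frac{L_n(\log t)}{t}(1-\chi(t))\right) t^{i\beta - \frac12}\, dt.
\]
Since $1-\chi$ is the characteristic function of $(1,\infty)$, the substitution $t = e^{u}$ (so $dt = e^{u}\,du$, and $t > 1$ corresponds to $u > 0$) turns this into
\[
-\frac{1}{\sqrt{2\pi}} \int_{0}^{\infty} L_n(u) e^{-u} e^{(i\beta - \frac12)u} e^{u}\, du = -\frac{1}{\sqrt{2\pi}} \int_{0}^{\infty} L_n(u) e^{-\frac{u}{2}} e^{i\beta u}\, du.
\]

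Next I would relate this integral to the one already evaluated. Formula \eqref{eq:ftln} (with $\omega$ renamed) gives
\[
\frac{1}{\sqrt{2\pi}} \int_{0}^{\infty} e^{-\frac{x}{2}} L_n(x) e^{-i\beta x}\, dx = \frac{1}{\sqrt{2\pi}} \frac{(i\beta - \tfrac12)^n}{(i\beta + \tfrac12)^{n+1}},
\]
which is exactly $u_n(-i\beta)$ after matching with \eqref{s09dfgioaVHFJKRFG}; equivalently it is $(C u_n)(i\beta)$ in the notation of \eqref{Ceeee}. The integral I obtained above has $e^{+i\beta u}$ rather than $e^{-i\beta u}$, so it equals $-u_n(i\cdot)$ evaluated with $\beta \mapsto -\beta$, i.e. replacing $i\beta$ by $-i\beta$ inside \eqref{s09dfgioaVHFJKRFG}. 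Explicitly this yields
\[
-\frac{1}{\sqrt{2\pi}} \frac{(-i\beta - \tfrac12)^n}{(-i\beta + \tfrac12)^{n+1}} = -\frac{1}{\sqrt{2\pi}} \frac{(i\beta + \tfrac12)^n (-1)^n}{(\tfrac12 - i\beta)^{n+1}}.
\]
Then I would carry out the routine algebraic simplification — multiplying numerator and denominator by appropriate powers of $i$ and $2$ — to bring this into the form $i\sqrt{2/\pi}\,\big(\frac{2\beta - i}{2\beta + i}\big)^n \frac{1}{2\beta + i}$, which is precisely $v_n(i\beta)$ as defined in \eqref{vvvveee}. Concretely, writing $i\beta + \tfrac12 = \tfrac{i}{2}(2\beta - i)$ and $\tfrac12 - i\beta = -\tfrac{i}{2}(2\beta + i)$ should make the identification transparent.

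The only real point requiring care — and the step I expect to be the main (minor) obstacle — is tracking the signs, the factors of $i$, and the power of $2$ through the conformal-map substitution, so that the answer lands exactly on the normalization chosen in \eqref{vvvveee} rather than off by a unimodular constant. One clean way to organize this is to invoke \eqref{kkKKhhKK}, Lemma \ref{0a9srfgrdsvcxsd}(b), and the unitary $C$ of \eqref{Ceeee}: the functions $-L_n(\log x)x^{-1}(1-\chi)$ form an orthonormal basis of $L^2(1,\infty)$, the $v_n$ form an orthonormal basis of $H^2_-(i\R) = C H^2_+(i\R)$, and the substitution $z \mapsto -z$ implements $C$; so it suffices to check the single identity $\mathcal{Q}(-L_0(\log x)x^{-1}(1-\chi)) = v_0$ together with the intertwining $\mathcal{Q}(I - \mathcal{H}_\infty^*)^* = M_{\bar\phi}\mathcal{Q}$ on $L^2(1,\infty)$, which follows from \eqref{eq:h1equiv} by taking adjoints and using \eqref{redaskk}. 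Either route reduces the proposition to the already-established \eqref{eq:ftln} plus bookkeeping.
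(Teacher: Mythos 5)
Your approach is the paper's approach: the paper gives no written proof of this proposition, but its intended argument is exactly the substitution $t=e^{u}$ followed by the Laplace/Fourier identity \eqref{eq:ftln} evaluated at $\omega=-\beta$, which is what you do (your alternative organization via $C$ and the intertwining is also sound, and your displayed formula $-\tfrac{1}{\sqrt{2\pi}}\tfrac{(-i\beta-\frac12)^n}{(-i\beta+\frac12)^{n+1}}$ is correct). One small slip: the quantity $\tfrac{1}{\sqrt{2\pi}}\tfrac{(i\beta-\frac12)^n}{(i\beta+\frac12)^{n+1}}$ is $u_n(i\beta)$, not $u_n(-i\beta)$; your conclusion $-u_n(-i\beta)$ for the actual integral is nevertheless right.

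However, the one step you deferred as "routine bookkeeping" is precisely where the identification fails to land on the printed normalization. Carrying it out: since $i\beta+\tfrac12=\tfrac{i}{2}(2\beta-i)$ and $i\beta-\tfrac12=\tfrac{i}{2}(2\beta+i)$, one gets
\[
-\frac{1}{\sqrt{2\pi}}\,\frac{(-i\beta-\tfrac12)^n}{(-i\beta+\tfrac12)^{n+1}}
=\frac{1}{\sqrt{2\pi}}\,\frac{(i\beta+\tfrac12)^n}{(i\beta-\tfrac12)^{n+1}}
=-\,i\sqrt{\frac{2}{\pi}}\Big(\frac{2\beta-i}{2\beta+i}\Big)^{n}\frac{1}{2\beta+i}
=-\,v_n(i\beta),
\]
as one can confirm for $n=0$ directly: $\mathcal{Q}\big(-t^{-1}\chi_{(1,\infty)}\big)(i\beta)=\tfrac{1}{\sqrt{2\pi}}\tfrac{1}{i\beta-\frac12}=-i\sqrt{2/\pi}\,\tfrac{1}{2\beta+i}$. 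So the computation produces $-v_n$, not $v_n$, with $v_n$ as defined in \eqref{vvvveee}; this is a unimodular-constant (sign) discrepancy in the paper's normalization rather than a flaw in your method, and it is harmless for everything downstream ($(-v_n)_{n\geq0}$ is still an orthonormal basis of $H^2_-(i\R)$ and the intertwining with $M_{\phi}$ is unaffected). But since you explicitly identified this sign-tracking as the main obstacle and then asserted the answer is "precisely $v_n(i\beta)$," you should note that the check does not close as stated: either \eqref{vvvveee} or the proposition needs an extra minus sign.
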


Finally, notice that
\begin{align*}
\mathcal{Q} (I - \mathcal{H}_{\infty}^{*}) v_{n}(i \beta) & = 
\mathcal{Q} (I - \mathcal{H}_{\infty}^{*}) (-\frac{ L_{n}(\log x)}{x} (1 - \chi))(i\beta)\\
 & = \mathcal{Q} (-\frac{L_{n -1}(\log x)}{x} (1 - \chi))(i\beta)\\
& = i \sqrt{\frac{2}{\pi}} \Big(\frac{2 \beta - i}{2 \beta + i}\Big)^{n - 1} \frac{1}{2 \beta + i}\\
& = M_{\phi}  i \sqrt{\frac{2}{\pi}} \Big(\frac{2 \beta - i}{2 \beta + i}\Big)^{n} \frac{1}{2 \beta + i}\\
& = M_{\phi} v_{n}(i \beta).
\end{align*}
Thus, since $(v_n)_{n \geq 0}$ is an orthonormal basis for $H^{2}_{-}(i \R)$ and $\mathcal{Q} L^2(1, \infty) = H^{2}_{-}(i \R)$ (see \eqref{kkKKhhKK}), it follows that 
$$\mathcal{Q} (I - \mathcal{H}_{\infty}^{*}) f = M_{\phi} \mathcal{Q} f, \quad f \in L^2(1, \infty).$$ Combining this with \eqref{redaskk} and \eqref{vvvveee}, we have the following.

\begin{Proposition}\label{yyysysSYY}
$I - \mathcal{H}_{\infty}^{*}$ on $L^2(0, \infty)$ is unitarily equivalent to $M_{\phi}$ on $L^2(i\R)$.
\end{Proposition}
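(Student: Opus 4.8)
The plan is to turn the two intertwining identities already recorded into a single statement valid on all of $L^2(0,\infty)$ and then read off the unitary equivalence. Write $V := I - \mathcal{H}_{\infty}^{*}$. The relevant picture is the orthogonal decomposition $L^2(0,\infty) = L^2(0,1) \oplus L^2(1,\infty)$, which $\mathcal{Q}$ carries onto $L^2(i\R) = H^{2}_{+}(i\R) \oplus H^{2}_{-}(i\R)$ by \eqref{s0f8oigfjdkafdWW}, \eqref{kkKKhhKK}, and \eqref{***}; moreover $M_{\phi}$ is \emph{unitary} on the whole of $L^2(i\R)$, since $|\phi| \equiv 1$ on $i\R$ (as recorded after \eqref{mspshsiii}). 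So it suffices to prove the operator identity $\mathcal{Q}V = M_{\phi}\mathcal{Q}$ on $L^2(0,\infty)$: since $\mathcal{Q}$ is an isometric isomorphism onto $L^2(i\R)$, this rearranges to $V = \mathcal{Q}^{-1}M_{\phi}\mathcal{Q}$, which is the asserted unitary equivalence.

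First I would verify $\mathcal{Q}Vf = M_{\phi}\mathcal{Q}f$ for $f \in L^2(0,1)$: by \eqref{redaskk} the restriction $V|_{L^2(0,1)}$ equals $I - \mathcal{H}_1^{*}$, and \eqref{eq:h1equiv} is exactly this identity. Next, for $f \in L^2(1,\infty)$: Lemma \ref{0a9srfgrdsvcxsd}(b) furnishes the orthonormal basis $\psi_n(x) = -L_n(\log x)(1-\chi(x))/x$ of $L^2(1,\infty)$, and the computation displayed just before the proposition gives $\mathcal{Q}V\psi_n = M_{\phi}\mathcal{Q}\psi_n$ directly for $n \geq 1$ (using $V\psi_n = \psi_{n-1}$); for $n = 0$ one has instead $V\psi_0 = \chi \in L^2(0,1)$, and the required equality $\mathcal{Q}\chi = M_{\phi}\mathcal{Q}\psi_0$ reduces to a one-line check relating the explicit boundary functions $u_0$ and $v_0$ of \eqref{vvvveee}. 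Both sides being bounded and linear, the identity extends to all of $L^2(1,\infty)$. Adding the two cases yields $\mathcal{Q}Vf = M_{\phi}\mathcal{Q}f$ for every $f = f_1 + f_2 \in L^2(0,1) \oplus L^2(1,\infty) = L^2(0,\infty)$, which completes the argument.

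There is no deep difficulty here, but the point that genuinely needs attention — and the reason this is not merely a ``take direct sums'' statement — is that $L^2(0,1)$ is invariant but \emph{not} reducing for $V$: indeed $V|_{L^2(0,1)}$ is a copy of the unilateral shift, so $VL^2(0,1)$ has codimension one in $L^2(0,1)$, and $VL^2(1,\infty)$ is not contained in $L^2(1,\infty)$ (for instance $V$ sends $\psi_0 = -\tfrac1x(1-\chi)$ to $\chi$). Consequently $V$ is not the orthogonal direct sum of its restrictions to the two summands, and one cannot simply concatenate Proposition \ref{98rgegbrfewrfvFF} with its $H^{2}_{-}(i\R)$ analogue. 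What makes the patching legitimate is only that the relation $\mathcal{Q}V = M_{\phi}\mathcal{Q}$ is linear in the vector it is tested against, so it may be verified on $L^2(0,1)$ and on $L^2(1,\infty)$ separately and then summed; the sole care-requiring spot is the index $n = 0$, where the two Laguerre bases interlock. A tidier but more abstract alternative would be to observe that $V$ is the minimal unitary extension of the unilateral shift $V|_{L^2(0,1)}$ (in the sense that $L^2(0,\infty)$ is the smallest $V$-reducing subspace containing $L^2(0,1)$, which holds since $\psi_0 = V^{-1}\chi$) and to invoke uniqueness of such extensions together with the fact, recorded after \eqref{eq:Jh2h2}, that $M_{\phi}$ on $L^2(i\R)$ is a bilateral shift of multiplicity one; but the summand-by-summand route above uses only what has already been set up.
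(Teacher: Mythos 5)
Your proof is correct and follows essentially the same route as the paper: the paper likewise splits $L^2(0,\infty)$ as $L^2(0,1)\oplus L^2(1,\infty)$, applies \eqref{eq:h1equiv} (via \eqref{redaskk}) on the first summand and the Laguerre-basis computation with the $v_n$ of \eqref{vvvveee} on the second, and then combines the two intertwining identities through the isometric isomorphism $\mathcal{Q}$. Your explicit treatment of the index $n=0$, where $V$ maps $-\tfrac{1}{x}(1-\chi)$ back into $L^2(0,1)$, is a small refinement of a point the paper's displayed computation glosses over, but it does not constitute a different approach.
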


The unitary operator $f(i x) \mapsto f(-ix)$ on $L^2(i \R)$ induces a unitary equivalence between $M_{\phi}$ and $M_{\overline \phi} = M^{*}_{\phi}$. This yields the following corollary.

\begin{Corollary}
$I - \mathcal{H}_{\infty}$ on $L^2(0, \infty)$ is unitarily equivalent to $M_{\phi}$ on $L^2(i \R)$.
\end{Corollary}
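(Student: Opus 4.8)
The plan is to deduce this corollary directly from Proposition \ref{yyysysSYY}, which already establishes that $I - \mathcal{H}_{\infty}^{*}$ on $L^2(0,\infty)$ is unitarily equivalent to $M_{\phi}$ on $L^2(i\R)$. Taking adjoints of this unitary equivalence immediately gives that $I - \mathcal{H}_{\infty}$ is unitarily equivalent to $M_{\phi}^{*} = M_{\overline{\phi}}$ on $L^2(i\R)$. So the only real content is to show that $M_{\overline{\phi}}$ and $M_{\phi}$ are themselves unitarily equivalent as operators on $L^2(i\R)$, which is exactly what the sentence preceding the corollary asserts via the unitary $C$ (or the map $f(ix) \mapsto f(-ix)$).

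First I would record that, with $C \colon L^2(i\R) \to L^2(i\R)$ given by $(Cf)(ix) = f(-ix)$ (a unitary which is its own adjoint, as noted after \eqref{Ceeee}), one has the intertwining relation $C M_{\phi} C = M_{\overline{\phi}}$. This is a one-line check: for $f \in L^2(i\R)$,
\[
(C M_{\phi} C f)(ix) = (M_{\phi} C f)(-ix) = \phi(-ix)\,(Cf)(-ix) = \phi(-ix)\, f(ix),
\]
and since $\phi(-ix) = \dfrac{-ix - \frac12}{-ix + \frac12} = \overline{\left(\dfrac{ix - \frac12}{ix + \frac12}\right)} = \overline{\phi(ix)}$ for real $x$, the right-hand side equals $\overline{\phi(ix)}\, f(ix) = (M_{\overline{\phi}} f)(ix)$. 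Hence $M_{\phi}$ and $M_{\overline{\phi}}$ are unitarily equivalent.

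Next I would assemble the chain: Proposition \ref{yyysysSYY} provides a unitary $U \colon L^2(0,\infty) \to L^2(i\R)$ with $U(I - \mathcal{H}_{\infty}^{*}) = M_{\phi} U$; taking adjoints (and using that $(I - \mathcal{H}_{\infty}^{*})^{*} = I - \mathcal{H}_{\infty}$) gives $U(I - \mathcal{H}_{\infty}) = M_{\phi}^{*} U = M_{\overline{\phi}} U$. Composing with $C$ yields $(CU)(I - \mathcal{H}_{\infty}) = C M_{\overline{\phi}} U = M_{\phi} (CU)$, and $CU$ is a unitary from $L^2(0,\infty)$ onto $L^2(i\R)$. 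This establishes the claimed unitary equivalence.

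There is no real obstacle here; the statement is essentially a corollary of the preceding proposition together with a symmetry of the multiplier $\phi$. The only point requiring a moment of care is the computation $\phi(-iy) = \overline{\phi(iy)}$ on the imaginary axis — equivalently, that $\phi$ has real coefficients, so that conjugation on the boundary is implemented by the reflection $x \mapsto -x$ — and the bookkeeping of which operator gets conjugated when passing to adjoints. Both are routine, so the proof will be short.
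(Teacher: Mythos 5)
Your proposal is correct and follows exactly the paper's route: Proposition \ref{yyysysSYY} plus adjoints gives $M_{\overline{\phi}}$, and the reflection unitary $C$ (using $\phi(-ix)=\overline{\phi(ix)}$) conjugates $M_{\overline{\phi}}$ back to $M_{\phi}$. The paper states this in a single sentence before the corollary; you have merely written out the same intertwining computations in full.
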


\subsection{A connection to the Laguerre shift}

The {\em Laguerre shift} was formulated by von Neumann  \cite{MR1581206} (see also \cite[p.~18]{MR822228}) as the bounded operator $\mathfrak{L}$ on $L^2(0, \infty)$ defined by
\begin{equation}\label{eq:vonNeumann}
(\mathfrak{L} f)(x) = f(x) - \int_{0}^{x} e^{\frac{t - x}{2}} f(t) dt.
\end{equation}
Recalling our earlier discussion that the \emph{Laguerre functions} defined by
$$\Phi_{n}(x) = e^{-\frac{x}{2}} L_{n}(x), \quad n = 0, 1, 2, \ldots,$$
constitute an orthonormal basis for $L^2(0, \infty)$,
von Neumann showed that
\begin{equation}\label{eq:lagshiftidentity}
\mathfrak{L} \Phi_n = \Phi_{n + 1}
\end{equation}
and thus $\mathfrak{L}$ is a shift of multiplicity one. General theory of such shifts shows that $\mathfrak{L}$ is unitarily equivalent to the unilateral shift $(S f)(z) = z f(z)$ on $H^2$.

Recalling the fact that
$$\frac{1}{\sqrt{2\pi}} \frac{(i\omega- \frac{1}{2})^n }{(i\omega+ \frac{1}{2})^{n+1}} = u_{n}(i \omega), \quad n = 0, 1, 2, \ldots $$ 
is the orthonormal basis for $H^{2}_{+}(i\R)$ discussed in \eqref{s09dfgioaVHFJKRFG}, the identity in \eqref{eq:ftln} says that
$$\mathcal{L} \mathfrak{L} \Phi_{n}=  M_{\phi} \mathcal{L} \Phi_n,$$
where $\mathcal{L}$ is the (normalized) Laplace transform from \eqref{LTT}. This provides a restatement of \eqref{eq:lagshiftidentity} in the following way.

\begin{Proposition}\label{prop unitary}
The Laguerre shift $\mathfrak{L}$ on $L^2(0, \infty)$ is unitarily equivalent to $M_{\phi}$ on $H^{2}_{+}(i \R)$.
\end{Proposition}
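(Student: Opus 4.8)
The plan is to assemble the unitary equivalence directly from the identities already established in the excerpt, so that essentially no new computation is required. The key observation is that $\mathfrak{L}$ and $M_\phi|_{H^2_+(i\R)}$ are both shifts of multiplicity one, and we have explicit orthonormal bases under which each acts by a single forward shift. Concretely, $(\Phi_n)_{n\geq 0} = (e^{-x/2}L_n(x))_{n\geq 0}$ is an orthonormal basis for $L^2(0,\infty)$ with $\mathfrak{L}\Phi_n = \Phi_{n+1}$ by \eqref{eq:lagshiftidentity}, while $(u_n)_{n\geq 0}$ from \eqref{s09dfgioaVHFJKRFG} is an orthonormal basis for $H^2_+(i\R)$ with $M_\phi u_n = u_{n+1}$.

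First I would recall that the normalized Laplace transform $\mathcal{L}$ of \eqref{LTT} is, by the Paley--Wiener theorem, an isometric isomorphism from $L^2(0,\infty)$ onto $H^2(\C^+)$, and hence, passing to boundary values, onto $H^2_+(i\R)$. Next I would invoke the identity \eqref{eq:ftln}, which upon setting the variable equal to $i\omega$ states precisely that $\mathcal{L}\Phi_n = u_n$ for every $n = 0,1,2,\ldots$; thus $\mathcal{L}$ carries one orthonormal basis onto the other. It then follows immediately that $\mathcal{L}\,\mathfrak{L}\,\Phi_n = \mathcal{L}\Phi_{n+1} = u_{n+1} = M_\phi u_n = M_\phi \mathcal{L}\Phi_n$ for all $n$, and since the $\Phi_n$ span a dense subspace of $L^2(0,\infty)$ and all operators in sight are bounded, this extends to $\mathcal{L}\,\mathfrak{L} = M_\phi\,\mathcal{L}$ on all of $L^2(0,\infty)$. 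Equivalently, $\mathfrak{L} = \mathcal{L}^{-1} M_\phi \mathcal{L}$, which is the asserted unitary equivalence with implementing unitary $\mathcal{L}$.

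I would organize the write-up as: (i) state that $\mathcal{L}\colon L^2(0,\infty)\to H^2_+(i\R)$ is a surjective isometry (Paley--Wiener, already recorded in the excerpt); (ii) record that \eqref{eq:ftln} gives $\mathcal{L}\Phi_n = u_n$; (iii) compute $M_\phi \mathcal{L}\Phi_n = \mathcal{L}\,\mathfrak{L}\,\Phi_n$ using \eqref{eq:lagshiftidentity} and $M_\phi u_n = u_{n+1}$; (iv) conclude by density and boundedness. In fact essentially all of this is already done in the two displayed lines preceding the Proposition in the excerpt, so the proof is a two-line intertwining argument plus the density remark.

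There is no serious obstacle here; the only point requiring a word of care is the passage from the basis identity to the operator identity, namely that the linear span of $(\Phi_n)_{n\geq 0}$ is dense in $L^2(0,\infty)$ (which is exactly the completeness statement for the Laguerre functions already cited in the excerpt) together with the fact that $\mathcal{L}$, $\mathfrak{L}$, and $M_\phi$ are all bounded, so that two bounded operators agreeing on a dense set agree everywhere. Beyond that, the result is an immediate repackaging of \eqref{eq:ftln} and \eqref{eq:lagshiftidentity}.
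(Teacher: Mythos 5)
Your proposal is correct and follows essentially the same route as the paper: the paper also takes $\mathcal{L}$ as the implementing unitary, reads off $\mathcal{L}\Phi_n = u_n$ from \eqref{eq:ftln}, and deduces the intertwining $\mathcal{L}\mathfrak{L}\Phi_n = M_\phi \mathcal{L}\Phi_n$ from \eqref{eq:lagshiftidentity}. Your added remark about density of the span of the Laguerre functions and boundedness is the only detail the paper leaves implicit.
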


Recall that a vector $\vec{v}$ in a separable Hilbert space $H$ is {\em cyclic} for a bounded linear operator $T$ on $H$ if $\vec{x}$ is not contained in any nontrivial invariant subspace for $T$, equivalently,  the linear span of its orbit $(T^n \vec{v})_{n \geq 0}$ is dense in $H$. One  can straightforwardly use Proposition \ref{prop unitary} to provide some cyclic vectors for the Laguerre shift. Namely, if $f \in H^2(\C^+)$ is outer and hence, by Beurling's Theorem, a cyclic vector for $M_{\phi}$, then $\mathcal{L}^{-1} f$ will be cyclic for the Laguerre shift. Particular instances of cyclic vectors for $\mathfrak{L}$ are the $L^2(0, \infty)$ functions
$$x^n e^{-c x}, \quad \re c > 0,  \quad n = 0, 1, 2, \ldots$$
since  their Laplace transforms will be nonzero constants times $(z + c)^{-n-1}$, which are outer functions in $H^2(\C^{+})$.
Examples of different nature will be provided in \S \ref{cyclkcic5}.

\subsection{A connection to the bilateral Laguerre shift}

The previous discussion can be taken a bit further by considering negative indices as well. Here an orthonormal
basis for $L^2(-\infty,0)$ is clearly given by
$$-e^{\frac{x}{2}} L_m(-x)\chi_{(-\infty,0)}(x), \quad m = 0, 1, 2, \ldots,$$ and, similarly as before,  these functions
Fourier transform to
$$\dfrac{1}{\sqrt{2\pi}} \dfrac{(i\omega+\frac{1}{2})^m }{(i\omega-\frac{1}{2})^{m+1}}, \quad m = 0, 1, 2, \ldots.$$
From \eqref{vvvveee}, this  is an orthonormal basis for $H^2_-(i\R)$.
A good reference for the bilateral Laguerre shift is \cite{MR611399}.

The   Laguerre shift in the transform domain is just multiplication by the function $\phi(i \omega)$ from \eqref{phiiiii}.
We have an orthonormal basis for $L^2(\R)$ defined by
\[
\Phi_{n}(x) = \begin{cases}
-e^{\frac{x}{2}} L_{-n-1}(-x) \chi_{(-\infty,0)}(x),& n = -1,-2, -3, \ldots\\
e^{-\frac{x}{2}} L_{n}(x)\chi_{(0,\infty)}(x),& n = 0, 1, 2, \ldots.\\
\end{cases}
\]

The bilateral Laguerre shift again satisfies $\mathfrak{L} \Phi_n= \Phi_{n+1}$, only now for all $n \in \Z$ (and not just $\N_{0}$ as before).
%Or, in $L^2(\R)$ the forward  bilateral Laguerre shift   is:
%\begin{eqnarray*}
%\ldots &\to& -e^{x/2}L_1(-x)\chi_{(-\infty,0)}(x) \to -e^{x/2}L_0(-x)\chi_{(-\infty,0)}(x)\\
%&\to& e^{-x/2}L_0(x)\chi_{(0,\infty)}(x)   \to  e^{-x/2}L_1(x)\chi_{(0,\infty)}(x) \to \ldots.
%\end{eqnarray*}
From these considerations we see that $\mathfrak{L}$ is unitarily equivalent to $M_{\phi}$ on $L^2(i \R)$.
Moreover, the von Neumann formula \eqref{eq:vonNeumann} extends to  $L^2(\R)$
by
\[
(\mathfrak{L} f)(x) = f(x) - \int_{-\infty}^{x} e^{\frac{t - x}{2}} f(t) dt,
\]
as can be verified by the analysis above.

\section{Invariant  subspaces for $\mathcal{H}_{\infty}$}

From Proposition \ref{yyysysSYY}, it follows that every invariant subspace for $\mathcal{H}_{\infty}^{*}$ must take the form
\begin{equation}\label{mmmMMminnversee}
\mathcal{Q}^{-1} \mathcal{M}  = T^{-1} \mathcal{F} \mathcal{M},
\end{equation}
where $\mathcal{M}$ is an $M_{\phi}$-invariant subspace of $L^2(i \R)$. Recall the multiplication operator $M_{\phi}$ from \eqref{mspshsiii}.

Of particular interest among the invariant subspaces of a linear bounded operator $T$ acting on a Hilbert space $H$ are the reducing ones since they encode the projections in the communtant of the operator. Recall that a non-trivial subspaces $M$ is a \emph{reducing subspace} if both $M$ and its orthogonal complement $M^\perp$ are invariant under $T$, or, equivalently, the orthogonal projection $P_M$ onto $M$ commutes with $T$.

\subsection{Reducing subspaces for $\mathcal{H}_{\infty}$}

Well known results of Wiener (originally stated for $L^2$ of the circle, see  \cite[Ch.~2]{MR0171178}, but which can be stated for $L^2(i\R)$ via the unitary operator $J$ from \eqref{eq:Jh2h2}), describe the  reducing subspaces for $M_{\phi}$ on $L^2(i\R)$ as
$\chi_{iE} L^2(i\R)$ for some (Lebesgue) measurable subset $E \subset \R$. This gives us the following.

\begin{Theorem}\label{reducingeweswefrT}
A subspace $\mathcal{N}$ of $L^2(0, \infty)$ is reducing for $\mathcal{H}_{\infty}$ if and only if there is a measurable subset $E$ of $\R$ such that
$$\mathcal{N} =  \left\{t \mapsto \int_{E} t^{- i x - \frac{1}{2}} f(ix) dx: f \in L^2(i\R)\right\}.$$
\end{Theorem}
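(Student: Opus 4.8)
The plan is to transport the known description of reducing subspaces for $M_\phi$ through the chain of unitary equivalences established in the preliminaries. By Proposition \ref{yyysysSYY}, $I - \mathcal{H}_\infty^*$ on $L^2(0,\infty)$ is unitarily equivalent to $M_\phi$ on $L^2(i\R)$ via $\mathcal{Q}$, i.e. $\mathcal{Q}(I-\mathcal{H}_\infty^*)\mathcal{Q}^{-1} = M_\phi$. Since reducing subspaces are unchanged by adding the identity (a subspace reduces $T$ iff it reduces $I-T$) and are preserved under passing between an operator and its adjoint, the reducing subspaces of $\mathcal{H}_\infty$ on $L^2(0,\infty)$ correspond exactly, under $\mathcal{Q}$, to the reducing subspaces of $M_\phi$ on $L^2(i\R)$.

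First I would record the cited fact that every reducing subspace of $M_\phi$ on $L^2(i\R)$ has the form $\chi_{iE}L^2(i\R)$ for a measurable $E\subset\R$; this is Wiener's theorem transported from $L^2(\T,m)$ via the isometric isomorphism $J$ of \eqref{eq:Jh2h2}, using that $J^{-1}M_\phi J$ is the bilateral shift, and that multiplication operators have the same reducing subspaces regardless of which unimodular symbol of modulus one generates the von Neumann algebra (here $\phi$ is one-to-one on $i\R$, so $\phi$ and $M_\xi$ generate the same maximal abelian von Neumann algebra). Then I would apply $\mathcal{Q}^{-1}$ to $\chi_{iE}L^2(i\R)$. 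By formula \eqref{Mellintrack}, for $g\in L^2(i\R)$,
$$(\mathcal{Q}^{-1}g)(t) = \frac{1}{\sqrt{2\pi}}\frac{1}{\sqrt{t}}\int_{-\infty}^\infty t^{-ix}g(ix)\,dx,$$
so taking $g = \chi_{iE}f$ with $f\in L^2(i\R)$ gives
$$(\mathcal{Q}^{-1}(\chi_{iE}f))(t) = \frac{1}{\sqrt{2\pi}}\int_E t^{-ix-\frac12}f(ix)\,dx,$$
which is exactly the stated form of $\mathcal{N}$ (absorbing the harmless constant $\frac{1}{\sqrt{2\pi}}$ into $f$, since $f$ ranges over all of $L^2(i\R)$). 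Thus $\mathcal{N} = \mathcal{Q}^{-1}(\chi_{iE}L^2(i\R))$.

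The main obstacle, such as it is, is being careful about two bookkeeping points rather than any real difficulty. The first is the identity-shift and adjoint juggling: one must note that $M$ reduces $\mathcal{H}_\infty$ iff $M$ reduces $\mathcal{H}_\infty^*$ iff $M$ reduces $I-\mathcal{H}_\infty^*$, each step being trivial from the definition $P_M T = T P_M$, but it should be stated so the reader sees why the equivalence for $I-\mathcal{H}_\infty^*$ delivers the result for $\mathcal{H}_\infty$. The second is justifying that $M_\phi$ and $M_\xi$ (multiplication by the independent variable, transported from the bilateral shift under $J$) have identical reducing subspaces: this follows because $\phi\colon i\R\to\T$ is a bijective bi-measurable map with measurable inverse, so a bounded measurable function is $\phi$-measurable iff it is measurable, hence the two multiplication operators commute with the same set of multiplications $M_\psi$, and reducing subspaces of a multiplication operator with non-constant symbol on a maximal abelian von Neumann algebra are precisely the $\chi_{iE}L^2(i\R)$. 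Finally I would remark that the containment $\mathcal{Q}L^2(0,\infty)=L^2(i\R)$ from the preliminaries ensures $f$ truly ranges over all of $L^2(i\R)$ and that $\mathcal{N}$ is a genuine (closed) subspace, being the image of a closed subspace under the isometric isomorphism $\mathcal{Q}^{-1}$. This completes the proof.
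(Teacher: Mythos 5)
Your proposal is correct and follows essentially the same route as the paper: invoke Wiener's description of the reducing subspaces of $M_{\phi}$ on $L^2(i\R)$ (transported from the circle via $J$), use the unitary equivalence $\mathcal{Q}(I-\mathcal{H}_{\infty}^{*})\mathcal{Q}^{-1}=M_{\phi}$, and then compute $\mathcal{Q}^{-1}\chi_{iE}L^2(i\R)$ via the inversion formula \eqref{Mellintrack}. The only difference is that you spell out the bookkeeping (invariance of the notion of reducing subspace under $T\mapsto I-T$ and $T\mapsto T^{*}$, and why $M_{\phi}$ has the same reducing subspaces as the bilateral shift) that the paper leaves implicit.
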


\begin{proof}
From \eqref{mmmMMminnversee} we see that $\mathcal{N}$ is reducing for $\mathcal{H}_{\infty}$ if and only if 
there is a measurable subset $E$ of $\R$ such that
$$\mathcal{N} = \mathcal{Q}^{-1} \chi_{i E} L^2(i\R)  = T^{-1} \mathcal{F}  \chi_{i E} L^2(i\R).$$ Then from \eqref{Mellintrack} we have
\begin{align*}
\mathcal{Q}^{-1} \chi_{i E} L^2(i\R)
& = \left\{t \mapsto \int_{E} t^{- i x - \frac{1}{2}} f(ix) dx: f \in L^2(i\R)\right\}. \qedhere
\end{align*}
\end{proof}

A vector $f \in L^2(0, \infty)$ is {\em $\ast$-cyclic }for $\mathcal{H}_{\infty}$ if the only {\em reducing} subspace containing $f$ is all of $L^2(0 ,\infty)$. Since $I - \mathcal{H}_{\infty}^{*}$ is unitarily equivalent to $M_{\phi}$ on $L^2(i \R)$, via $\mathcal{Q}$, we see that the $\ast$-cyclic vectors of $\mathcal{H}_{\infty}$ correspond to the $\ast$-cyclic vectors of $M_{\phi}$ and, from Wiener's theorem, these are the functions in $L^2(i \R)$ which do not vanish on any set of positive measure. We summarize this discussion with the following result.

\begin{Theorem}\label{prop:111}
For $f \in L^2(0, \infty)$ the following are equivalent.
\begin{enumerate}
\item[(a)] $f$ is a $\ast$-cyclic vector for $\mathcal{H}_{\infty}$.
\item[(b)] There is an $h \in L^2(i \R)$ that does not vanish on any set of positive measure such that
$$f(t) = \int_{-\infty}^{\infty} t^{-ix - \frac{1}{2}} h(i x)dx.$$
\item[(c)] The function $$\beta \mapsto \int_{-\infty}^{\infty} f(e^{x}) e^{(i \beta + \frac{1}{2}) x}dx$$ does not vanish on any  subset of $\R$ of positive measure.
\end{enumerate}
\end{Theorem}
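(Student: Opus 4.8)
The plan is to route all three conditions through the single intermediate statement
\[
(\star)\qquad \mathcal{Q}f \text{ does not vanish on any subset of } i\R \text{ of positive measure,}
\]
translating back and forth by means of the explicit formulas for $\mathcal{Q}$, $\mathcal{Q}^{-1}$ and $T$ from Section~\ref{sectiosn333}--Section~2 together with Wiener's classification of the reducing subspaces of $M_{\phi}$.

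First I would prove (a) $\Leftrightarrow$ $(\star)$. Since $V = I-\mathcal{H}_{\infty}^{*}$ has $V^{*} = I-\mathcal{H}_{\infty}$, a subspace reduces $\mathcal{H}_{\infty}$ exactly when it reduces $V$; hence, by Proposition~\ref{yyysysSYY}, the reducing subspaces of $\mathcal{H}_{\infty}$ are precisely the sets $\mathcal{Q}^{-1}\mathcal{R}$ with $\mathcal{R}$ a reducing subspace of $M_{\phi}$ on $L^2(i\R)$, which by Wiener's theorem (as already used for Theorem~\ref{reducingeweswefrT}) are the $\chi_{iE}L^2(i\R)$. For $g \in L^2(i\R)$, one has $g \in \chi_{iE}L^2(i\R)$ iff $E$ contains $\{x : g(ix) \neq 0\}$ up to a null set, so the smallest reducing subspace of $M_{\phi}$ containing $g$ is $\chi_{iE_0}L^2(i\R)$ with $E_0 = \{x : g(ix)\neq 0\}$; this equals all of $L^2(i\R)$ exactly when $E_0$ has full measure, i.e.\ when $g$ vanishes only on a null set. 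Taking $g = \mathcal{Q}f$, and using that $f$ lies in $\mathcal{Q}^{-1}\mathcal{R}$ iff $\mathcal{Q}f \in \mathcal{R}$, yields (a) $\Leftrightarrow$ $(\star)$.

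Next I would identify the functions in (b) and (c) with $\mathcal{Q}f$ up to a harmless nonzero constant. For (b), set $h := \tfrac{1}{\sqrt{2\pi}}\,\mathcal{Q}f \in L^2(i\R)$; the inversion formula \eqref{Mellintrack} gives
\[
f(t) = (\mathcal{Q}^{-1}\mathcal{Q}f)(t) = \frac{1}{\sqrt{2\pi}}\,\frac{1}{\sqrt{t}}\int_{-\infty}^{\infty} t^{-ix}(\mathcal{Q}f)(ix)\,dx = \int_{-\infty}^{\infty} t^{-ix-\frac12} h(ix)\,dx,
\]
and conversely any $h$ realizing the representation in (b) agrees a.e.\ with $\tfrac{1}{\sqrt{2\pi}}\mathcal{Q}f$ by injectivity of $\mathcal{Q}^{-1}$; since scaling by $1/\sqrt{2\pi}$ does not change the vanishing set, (b) $\Leftrightarrow$ $(\star)$. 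For (c), Proposition~\ref{inversion lemma} and the definition \eqref{TTTTisoisoms} of $T$ give
\[
(\mathcal{Q}f)(i\beta) = (\mathcal{F}^{-1}Tf)(\beta) = \frac{1}{\sqrt{2\pi}}\int_{-\infty}^{\infty} f(e^{x})\,e^{(i\beta+\frac12)x}\,dx,
\]
so the function in (c) is $\sqrt{2\pi}$ times $\beta \mapsto (\mathcal{Q}f)(i\beta)$, and hence (c) $\Leftrightarrow$ $(\star)$ as well; chaining the three equivalences completes the proof.

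I do not anticipate a genuine obstacle: the statement is essentially a dictionary entry for the unitary equivalence of Proposition~\ref{yyysysSYY}. The only two points needing a moment of care are the observation that passing from $\mathcal{H}_{\infty}$ to $I-\mathcal{H}_{\infty}^{*}$ leaves the lattice of reducing subspaces unchanged, and the elementary fact that the smallest reducing subspace of $M_{\phi}$ containing a vector is the one cut out by its (essential) support --- both routine once Wiener's classification is in hand.
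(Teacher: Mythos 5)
Your proposal is correct and follows essentially the same route as the paper: the paper derives the theorem from the unitary equivalence $\mathcal{Q}(I-\mathcal{H}_{\infty}^{*})\mathcal{Q}^{-1}=M_{\phi}$ together with Wiener's description of the reducing subspaces, then reads off (b) and (c) from the explicit formulas \eqref{Mellintrack} and Proposition \ref{inversion lemma}. You simply spell out the details (the intermediate condition on $\mathcal{Q}f$, the constant $1/\sqrt{2\pi}$, the identification of the smallest reducing subspace with the essential support) that the paper leaves as a brief discussion preceding the statement.
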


Some specific  $\ast$-cyclic vectors for $\mathcal{H}_{\infty}$ can be derived as follows.

\begin{Example}
Consider the function
$$iy \mapsto \frac{1}{1-iy} \in L^2(i \R).$$
This function does not vanish on $i \R$ and thus is $\ast$-cyclic for $M_{\phi}$.
To calculate $Q^{-1} = T^{-1} \mathcal{F}$ (see Proposition \ref{inversion lemma}) applied to this function,  we 
observe that 
\begin{equation}\label{eq:invfte-x}
 \int_0^\infty e^{-x} e^{iyx} \, dx = \frac{1}{1-iy},
 \end{equation}
 and thus
$$\mathcal{F}\left(\frac{1}{1-iy}\right)(x) = \sqrt{2 \pi} e^{-x} \chi_{(0, \infty)}(x) \in L^2(\R).$$
Now use the formula  $(T^{-1} g)(t) = g(\log t)/\sqrt{t}$ from \eqref{TTTTinverse} to see  that
$$Q^{-1}\left(\frac{1}{1-iy} \right)(t) = \sqrt{2 \pi} \frac{1}{t^{\frac{3}{2}}} \chi_{(1, \infty)}(t).$$
Note the use of the identity $\chi_{(0, \infty)}(\log t) = \chi_{(1, \infty)}(t)$ in the above calculation.
Hence, an example of a $\ast$-cyclic vector for $\mathcal{H}_{\infty}$ is
$$ \frac{1}{t^{\frac{3}{2}}} \chi_{(1, \infty)}(t).$$

For $n = 1, 2, \ldots,$ by differentiating \eqref{eq:invfte-x} $(n-1)$ times with respect to $y$ we
see that
$$\mathcal{F}\Big(\frac{1}{(1-iy)^n}\Big)(x) = c_n e^{-x} x^{n - 1}\chi_{(0, \infty)}(x),$$
where $c_n$ is a nonzero constant depending only on $n$. Thus,
we obtain the following family of  $\ast$-cyclic vectors for $\mathcal{H}_{\infty}$:
$$\frac{1}{t^{\frac{3}{2}}} (\log t)^{n - 1} \chi_{(1, \infty)}(t), \quad n = 1, 2, \ldots$$
\end{Example}

\begin{Example}
One can also provide other $\ast$-cyclic vectors for $\mathcal{H}_{\infty}$ by considering functions in $L^2(i\R)$ defined by
$$G_{N}(i y) = e^{-\frac{y^2}{2}} H_{4 N}(y), \quad N = 0, 1, 2, \ldots,$$
where $H_{K}$ is the $K$-th Hermite polynomial. Since these functions only vanish at a finite number of points on $i \R$, they are $\ast$-cyclic vectors  for $M_{\phi}$ on $L^2(i \R)$. Moreover, the Fourier transform of $G_{N}$ is equal to itself (as these are eigenvectors for the Fourier transform corresponding to the eigenvalue $1$ \cite{MR4548181}).  Thus, $T^{-1} \mathcal{F} G_{N}$ will be a $\ast$-cyclic vector for the Hardy operator $\mathcal{H}_{\infty}$.
Note that
$$g_{N} : = T^{-1} \mathcal{F} G_{N}$$ can be computed as
$$g_{N}(t) = \frac{1}{\sqrt{t}} e^{-\frac{1}{2}(\log t)^2} H_{4N}(\log t).$$
For example, when $N = 0$ we get
$$g_{0}(t) = \frac{e^{-\frac{1}{2} \log ^2(t)}}{\sqrt{t}}$$
is a $\ast$-cyclic vector for $\mathcal{H}_{\infty}$. When $n = 1$ we get
$$g_{1}(t) = \frac{4 e^{-\frac{1}{2} \log ^2(t)} \left(4 \log ^4(t)-12 \log ^2(t)+3\right)}{\sqrt{t}}$$
is a $\ast$-cyclic vector for $\mathcal{H}_{\infty}$.
\end{Example}

A wider class of $\ast$-cyclic vectors for $\mathcal{H}_{\infty}$ comes from this next result.

\begin{Proposition}\label{supporttstsaaa}
For each $a>0$ all nonzero $L^2(0,\infty)$ functions supported on $[0,a]$ are $*$-cyclic for $\mathcal{H}_{\infty}$.
Likewise, all nonzero functions supported on $[a,\infty)$ are also $*$-cyclic.
\end{Proposition}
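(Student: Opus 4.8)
The plan is to transfer the problem to $L^2(i\R)$ via the Mellin transform $\mathcal{Q}$ and invoke Theorem \ref{prop:111}(c): $f$ is $*$-cyclic for $\mathcal{H}_\infty$ if and only if its ``Mellin symbol'' $g(\beta) := \int_{-\infty}^\infty f(e^x) e^{(i\beta+\frac12)x}\,dx$ does not vanish on any set of positive measure. So everything reduces to a statement about zero sets of certain transforms.

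First I would handle the case of $f$ supported on $[0,a]$. Under the change of variables $t = e^x$ (equivalently, applying $T$ from \eqref{TTTTisoisoms}), the support condition becomes: $(Tf)(x) = f(e^x)e^{x/2}$ is supported on $(-\infty, \log a]$. By Proposition \ref{inversion lemma}, $g = \mathcal{F}^{-1}(Tf)$ up to the identification $\beta \leftrightarrow i\beta$. Now $Tf \in L^2(\R)$ is supported on a half-line, so by the Paley--Wiener theorem its (inverse) Fourier transform extends to a function analytic in a half-plane — concretely, $\beta \mapsto g(\beta)$ is the boundary value of a function in a Hardy space of a half-plane (after the trivial exponential shift absorbing $\log a$). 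A nonzero $H^2$ function of a half-plane cannot vanish on a set of positive measure on the boundary line (this is the classical F.\ and M.\ Riesz / log-integrability theorem for Hardy spaces, which is exactly the kind of fact already used implicitly via Beurling--Lax in the excerpt). Hence $g$ does not vanish on any positive-measure set, so $f$ is $*$-cyclic. Concretely, one can read this off from \eqref{sd98gsdf1111100}–\eqref{s0f8oigfjdkafdWW}: truncating the support of $f$ to $[0,a]$ and rescaling puts $\mathcal{Q}f$ into a (shifted copy of) $H^2_+(i\R)$, whose nonzero elements have this non-vanishing property.

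For the case of $f$ supported on $[a,\infty)$, I would argue symmetrically: now $Tf$ is supported on $[\log a, \infty)$, so its Fourier transform lies in the conjugate Hardy space (the boundary values of an $H^2$ function of the \emph{other} half-plane), and \eqref{kkKKhhKK} together with the unitary $C$ from \eqref{Ceeee} gives the analogous statement that such a transform has no positive-measure zero set. Alternatively — and perhaps cleanest for the write-up — one can note that the substitution $t \mapsto 1/t$ on $L^2(0,\infty)$ is a unitary that conjugates $\mathcal{H}_\infty$-reducibility questions and swaps ``supported on $[0,1/a]$'' with ``supported on $[a,\infty)$'', reducing the second case to the first. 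Either route avoids any genuinely new computation.

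The main obstacle is not any single hard estimate but getting the bookkeeping of the several transforms ($T$, $\mathcal{F}$, $\mathcal{Q}$, the exponential shift by $\log a$, and the half-plane/Hardy-space identifications) lined up correctly, so that the support condition on $f$ translates cleanly into membership in a one-sided Hardy space for $g$. Once that translation is pinned down, the conclusion is immediate from the classical fact that a nonzero Hardy-space function on a half-plane has $\log|g|$ integrable against the Poisson kernel and therefore cannot vanish on a set of positive boundary measure. I would state that classical fact as a cited lemma rather than reprove it.
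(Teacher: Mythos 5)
Your proposal is correct and follows essentially the same route as the paper: both reduce to Theorem \ref{prop:111}(c), use the change of variables $t=e^{x}$ so that the support hypothesis places $Tf$ on a half-line, shift by $\log a$ to invoke the Paley--Wiener theorem, and conclude from the standard fact that a nonzero Hardy-space function on a half-plane cannot vanish on a set of positive boundary measure. The only cosmetic difference is your optional $t\mapsto 1/t$ symmetry for the second case, where the paper instead repeats the shift argument landing in $H^{2}_{-}(i\R)$.
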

\begin{proof}
Take an $f \in L^2(0, \infty) \setminus \{0\}$ supported on $[0,a]$.
By Theorem.~\ref{prop:111} we wish to show that the function
\[
\beta \mapsto \int_{-\infty}^\infty f(e^x) e^{i\beta x + x/2} \, dx
\]
is nonzero almost everywhere on $\R$. By our assumption on $f$, this function becomes
\[
\beta \mapsto\int_{-\infty}^{\log a} g(x) e^{i\beta x } \, dx
\]
where $g$ is the $L^2(\R)$ function defined by $g(x)=(Tf)(x)=f(e^x)e^{x/2}$.
Now set $y=\log a-x$ to see that the function in question is
\[
\beta \mapsto e^{i\beta \log a}\int_{0}^\infty g(\log a-y)  e^{-i\beta y} \, dy.
\]
By the Paley--Wiener theorem, recall \eqref{LTT}, the integral above belongs to $H^{2}_{+}(i\R)$, and, via standard Hardy space theory,  is nonzero almost everywhere.

For the second part, the integral becomes
\[
\beta \mapsto\int_{\log a}^{\infty} g(x) e^{i\beta x } \, dx
\]
and we write $y=x-\log a$ to obtain
\[
\beta \mapsto e^{i \beta \log a}\int_{0}^{\infty} g(y+\log a) e^{i\beta y } \, dy,
\]
with the same conclusion, except that the function above belongs to $H^{2}_{-}(i\R)$.
\end{proof}

It follows immediately that all nontrivial functions  supported on
a finite subinterval $[a,b]$ of  $[0, \infty)$ are $\ast$-cyclic for $\mathcal{H}_{\infty}$.\\

%It seems to me like this theorem can be extended to the following.
%
%\begin{Proposition}
%If $g \in L^2(0, \infty)$ is supported on an interval $[a, b] \subset [0, \infty)$, then $g$ is $\ast$-cyclic for $\mathcal{H}_{\infty}$.
%\end{Proposition}
%
%\begin{proof}
%Suppose that the support of $g$ lies within the interval $[a, b] \subset [0, \infty)$. Then the analysis from the previous proof, but with the substitution $y = \log b - x$, yields the function
%$$\beta \mapsto e^{i \beta \log b} \int_{0}^{\log(b/a)} g(\log b - y) e^{-i \beta y}{dy}.$$
%As before, invoke the PW theorem, etc.
%\end{proof}

\subsection{Non-reducing subspaces for $\mathcal{H}^{*}_{\infty}$}

With Theorem \ref{prop:111} at hand, some remarks about the non-reducing subspaces of $\mathcal{H}^{*}_{\infty}$ are in order. First, inverting $\mathcal{Q}$ from \eqref{Mellintrack}, we see that for $g \in L^2(i\R)$ we have
$$
(\mathcal{Q}^{-1} f)(t)  = \frac{1}{\sqrt{2 \pi}} \int_{-\infty}^{\infty} t^{-ix - \frac{1}{2}} g(i x) dx.
$$
By Helson's theorem \cite[Ch.~II]{MR0171178}, the non-reducing $M_{\phi}$-invariant subspaces are $q(i x) H^{2}_{+}(i \R)$, where $q$ is unimodular, and these can be written as
$$\left\{x \mapsto  q(i x)\int_{0}^{\infty} e^{-ix w} h(w) dw: h \in L^2(0, \infty)\right\}.$$
Note the use of the Paley--Wiener theorem here. Putting this all together gives us the following.

\begin{Theorem}\label{thm:7.1}
For a subspace $\mathcal{M}$ of $L^2(0, \infty)$, the following are equivalent.
\begin{enumerate}
\item[(a)] $\mathcal{M}$ is a non-reducing invariant subspace for $\mathcal{H}_{\infty}^{*}$.
\item[(b)] There is a unimodular function $q$ on $i \R$ such that
\begin{equation}\label{eq:redhstar}
\mathcal{M} =  \left\{t \mapsto  \int_{-\infty}^{\infty} t^{-ix - \frac{1}{2}} q(i x) \left(\int_{0}^{\infty} e^{-ix w} h(w) dw\right) dx: h \in L^2(0, \infty)\right\}.
\end{equation}
\end{enumerate}
\end{Theorem}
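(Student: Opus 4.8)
The plan is to deduce Theorem~\ref{thm:7.1} by pulling back Helson's description of the non-reducing invariant subspaces of the bilateral shift through the unitary equivalences already established in the paper. By Proposition~\ref{yyysysSYY}, the Mellin transform $\mathcal{Q}$ intertwines $I - \mathcal{H}_{\infty}^{*}$ on $L^2(0,\infty)$ with $M_{\phi}$ on $L^2(i\R)$; moreover, since $\mathcal{H}_{\infty}^{*}$ and $I-\mathcal{H}_{\infty}^{*}$ differ by the identity, a subspace is invariant for $\mathcal{H}_{\infty}^{*}$ if and only if it is invariant for $I-\mathcal{H}_{\infty}^{*}$, and likewise for the reducing property. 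Hence $\mathcal{M} \subseteq L^2(0,\infty)$ is a non-reducing invariant subspace for $\mathcal{H}_{\infty}^{*}$ precisely when $\mathcal{Q}\mathcal{M}$ is a non-reducing $M_{\phi}$-invariant subspace of $L^2(i\R)$. This reduces (a)$\iff$(b) to translating the known form of such $\mathcal{Q}\mathcal{M}$ back through $\mathcal{Q}^{-1}$.

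Next I would invoke Helson's theorem in the form stated just before the theorem: the non-reducing $M_{\phi}$-invariant subspaces of $L^2(i\R)$ are exactly the spaces $q(ix)\, H^{2}_{+}(i\R)$, where $q$ is a unimodular function on $i\R$. Here one uses that via the unitary $J$ of \eqref{eq:Jh2h2}, $M_{\phi}$ on $L^2(i\R)$ is unitarily equivalent to the bilateral shift on $L^2(\T,m)$, so the Wiener--Helson classification applies; the reducing ones correspond to the $\chi_{iE}L^2(i\R)$ already used in Theorem~\ref{reducingeweswefrT}, and the remaining (non-reducing) invariant subspaces are the $qH^{2}_{+}(i\R)$ with $q$ unimodular. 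Using the Paley--Wiener theorem (as recorded around \eqref{LTT}), every element of $H^{2}_{+}(i\R)$ is of the form $ix \mapsto \int_{0}^{\infty} e^{-ixw} h(w)\,dw$ for a unique $h \in L^2(0,\infty)$, so a generic element of $q H^{2}_{+}(i\R)$ is $ix \mapsto q(ix)\int_{0}^{\infty} e^{-ixw} h(w)\,dw$.

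Finally I would apply the inversion formula \eqref{Mellintrack}, namely $(\mathcal{Q}^{-1}g)(t) = \frac{1}{\sqrt{2\pi}}\int_{-\infty}^{\infty} t^{-ix-\frac12} g(ix)\,dx$, to the generic element just described (absorbing the harmless constant $\frac{1}{\sqrt{2\pi}}$ into the arbitrary $h$, or simply noting the spaces coincide as sets of functions since $h$ ranges over all of $L^2(0,\infty)$). This yields exactly the displayed formula \eqref{eq:redhstar}, giving (b)$\Rightarrow$(a) and (a)$\Rightarrow$(b) simultaneously once one checks that $\mathcal{Q}^{-1}$ carries the family $\{qH^{2}_{+}(i\R): q \text{ unimodular}\}$ onto the family in (b).

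The genuinely delicate point is not the computation but the bookkeeping around \emph{which} invariant subspaces Helson's theorem leaves as the non-reducing ones, and ensuring the dictionary between ``$L^2$ of the circle'' and ``$L^2(i\R)$'' via $J$ is applied correctly — in particular that $J$ (and hence $\mathcal{Q}$, through Proposition~\ref{98rgegbrfewrfvFF}) sends $H^2$ to $H^{2}_{+}(i\R)$, so that the inner-times-$H^2$ and unimodular-times-$H^2$ descriptions land on the half-plane Hardy space with the correct one of the two half-planes. Everything else is a routine substitution, so I would keep the proof short: state the unitary reduction, quote Helson, expand via Paley--Wiener, and apply \eqref{Mellintrack}.
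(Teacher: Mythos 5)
Your proposal is correct and follows essentially the same route as the paper: the paper's argument is exactly the discussion preceding the theorem, namely the unitary reduction via $\mathcal{Q}$ to $M_{\phi}$ on $L^2(i\R)$, Helson's classification of the non-reducing invariant subspaces as $q(ix)H^{2}_{+}(i\R)$, the Paley--Wiener representation of $H^{2}_{+}(i\R)$ elements as Laplace transforms, and the inversion formula \eqref{Mellintrack}. Your remark about absorbing the constant $\frac{1}{\sqrt{2\pi}}$ into the arbitrary $h$ is a sensible way to reconcile \eqref{Mellintrack} with the displayed formula \eqref{eq:redhstar}.
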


\begin{Example}
For each $a > 0$, the subspace $\chi_{[a, \infty)} L^2(0, \infty)$ is
$\mathcal{H}_{\infty}$-invariant and thus its orthogonal complement
$\chi_{[0, a]} L^{2}(0, \infty)$ is $\mathcal{H}^{*}_{\infty}$-invariant (and non-reducing). The identity
$$\int_{0}^{a} f(t) t^{i \beta - \frac{1}{2}} dt = a^{i \beta} \sqrt{a} \int_{0}^{1} f(a u) u^{i \beta - \frac{1}{2}} du$$
shows that
$$\mathcal{Q} \chi_{[0, a]} L^{2}(0, \infty) = a^{i \beta} H^{2}_{+}(i \R).$$
\end{Example}

To classify the non-reducing invariant subspaces for $\mathcal{H}_{\infty}$, rather than $\mathcal{H}_{\infty}^*$, we note, from the fact that $|q(ix)| = 1$ almost everywhere on $i\R$ and that $H^{2}_{-}(i \R) \perp H^{2}_{+}(i\R)$ (recall \eqref{***}), that
the orthogonal complement of the subspace $q(ix)H^2_+(i\R)$ is $ q(ix) H^2_-(i\R)$, also
that $f$ belongs to $H^2_+(i\R)$ if and only if $ix \mapsto f(-ix)$ belongs to $H^2_-(i\R)$ (recall \eqref{Ceeee}).
Therefore, the corresponding formula for the non-reducing invariant subspaces for $\mathcal{H}_{\infty}$ is obtained
by replacing \eqref{eq:redhstar} with
\begin{equation}\label{eq:nonreducingh}
\mathcal{M} =  \left\{t \mapsto  \int_{-\infty}^{\infty} t^{-ix - \frac{1}{2}} q(i x) \left(\int_{0}^{\infty} e^{ix w} h(w) dw\right) dx: h \in L^2(0, \infty)\right\}.
\end{equation}

Returning to Proposition \ref{supporttstsaaa}, evidently, a function defined on $[0,a]$ cannot be cyclic for $\mathcal{H}_{\infty}^*$ alone, as
its images under powers of $\mathcal{H}^{*}_{\infty}$ are all supported on $[0,a]$.
Curiously, it need not be cyclic for $\mathcal{H}_\infty$ either, as the following example shows.

\begin{Example}
Recall that $\chi$ denotes the characteristic function of $[0,1]$. A preliminary calculation
with Fourier transforms yields
\[
\frac{1}{\sqrt{2\pi}}\int_{-\infty}^0 e^{ixy} e^{\frac{y}{2}} \, dy = \frac{1}{\frac12+ix}
\]
and hence (inverting the transform)
\[
\frac{1}{\sqrt{2\pi}}\int_{-\infty}^\infty e^{-ixy} \frac{1}{ \frac12+ix} \, dx = e^{\frac{y}{2}} \chi_{[-\infty,0)}(y).
\]
Writing $t=e^y$ we have
\[
\frac{1}{\sqrt{2\pi}}\int_{-\infty}^\infty t^{-ix-\frac{1}{2}} \frac{1}{ \frac12+ix} \, dx =  \chi(t).
\]
Comparing this with \eqref{eq:nonreducingh}, we see that $\chi$ lies in the
nontrivial $\mathcal{H}_{\infty}$-invariant
subspace corresponding to the unimodular function
$$q(ix)=\frac{\frac12-ix}{\frac12+ix}$$ 
and in this case
we have $h(w)=\frac{1}{\sqrt{2 \pi}} e^{-\frac{w}{2}}$ in \eqref{eq:nonreducingh}.
\end{Example}

A remarkable theorem of Volberg and Kargaev  \cite{MR1206343} says that given any positive number $\ell$ there exists a measurable set $E \subset \R$ such that $|E| < \infty$ and $\mathcal{F} \chi_{E}$ is zero on $(-\ell, \ell)$. In particular, this yields the following corollary:

\begin{Corollary}
There exist nonzero functions in $ L^2(0, \infty)$ vanishing on an interval which are not $\ast$-cyclic vectors for $\mathcal{H}_{\infty}$.
\end{Corollary}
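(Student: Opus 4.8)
The plan is to run the Volberg--Kargaev set through the Mellin dictionary of Theorem~\ref{prop:111}, taking care about which direction of the Fourier transform to apply. Fix $\ell>0$ and apply the Volberg--Kargaev theorem to obtain a measurable set $E\subset\R$ of positive finite measure with $(\mathcal{F}\chi_E)|_{(-\ell,\ell)}\equiv 0$ (the case $|E|=0$ being vacuous). Since $|E|<\infty$ we have $\chi_E\in L^1(\R)\cap L^2(\R)$, so $u:=\mathcal{F}\chi_E$ is a well-defined, nonzero element of $L^2(\R)$ that vanishes on the open interval $(-\ell,\ell)$.

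Next, set $f:=T^{-1}u$, that is, $f(t)=u(\log t)/\sqrt{t}$ for $t>0$, using \eqref{TTTTinverse}. Because $T$ from \eqref{TTTTisoisoms} is an isometric isomorphism, $f\in L^2(0,\infty)$ with $\|f\|_{L^2(0,\infty)}=\|u\|_{L^2(\R)}\neq 0$. Moreover $f(t)=0$ exactly when $u(\log t)=0$, so $f$ vanishes identically on the interval $(e^{-\ell},e^{\ell})\subset(0,\infty)$. It remains to check that $f$ is not $\ast$-cyclic for $\mathcal{H}_\infty$. By Proposition~\ref{inversion lemma},
\[
\mathcal{Q}f=\mathcal{F}^{-1}(Tf)=\mathcal{F}^{-1}u=\mathcal{F}^{-1}\mathcal{F}\chi_E=\chi_E .
\]
Since $|E|<\infty$ while $|\R|=\infty$, the function $\chi_E$ vanishes on $E^{c}$, a set of positive (indeed infinite) measure; hence by Theorem~\ref{prop:111} (the equivalence of (a) and (b), or equivalently of (a) and (c)), $f$ is not $\ast$-cyclic. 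This $f$ is the required example, and it vanishes on an interval.

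The one point deserving attention is the placement of $\mathcal{F}$ versus $\mathcal{F}^{-1}$: Volberg--Kargaev furnishes a function, namely $\chi_E$, whose \emph{transform} vanishes on an interval, whereas to meet the hypotheses of Theorem~\ref{prop:111} we need $\mathcal{Q}f=\mathcal{F}^{-1}(Tf)$ to vanish on a positive-measure set while $Tf$ itself vanishes on an interval (so that $f$ vanishes on an interval of $(0,\infty)$). The resolution is to take $Tf=u$ to be the \emph{transform} $\mathcal{F}\chi_E$, so that $\mathcal{Q}f$ returns to $\chi_E$. Beyond this bookkeeping there is no genuine obstacle: everything else is the unitarity of $T$ and of $\mathcal{F}$ together with the already-established Theorem~\ref{prop:111}.
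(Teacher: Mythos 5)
Your proposal is correct and is exactly the argument the paper intends: the corollary is stated as an immediate consequence of the Volberg--Kargaev theorem, and your construction $f=T^{-1}(\mathcal{F}\chi_E)$, with $\mathcal{Q}f=\mathcal{F}^{-1}Tf=\chi_E$ vanishing on the positive-measure set $E^{c}$ while $f$ vanishes on $(e^{-\ell},e^{\ell})$, is the natural fleshing-out of that remark via Proposition \ref{inversion lemma} and Theorem \ref{prop:111}. Your care over the direction of the transform (and over $E$ having positive measure so that $f\neq 0$) is exactly the right bookkeeping.
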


\section{Cyclic vectors: examples}\label{cyclkcic5}

Beurling's theorem says that the cyclic vectors for $M_{\phi}$ on $H^{2}_{+}(i\R)$ are the outer functions in $H^{2}_{+}(i\R)$ \cite{Garnett}.
From Proposition \ref{98rgegbrfewrfvFF}, the cyclic vectors for $\mathcal{H}_1^*$  are those $f \in L^2(0, 1)$ such that the $H^2(\C^{+})$ function
$$(\mathcal{Q} f)(z) = \int_{0}^{1} f(t) t^{z - \frac{1}{2}}dt, \quad z \in \C^{+},$$ is an outer function. Recall \eqref{sd98gsdf1111100} which shows that $\mathcal{Q} f$ is the Laplace transform of $$(W f)(u) = f(e^{-u}) e^{-\frac{u}{2}}.$$ In general, this seems difficult to unpack. However, we can provide a few examples.

\begin{Example}
A first class of examples of cyclic vectors for $\mathcal{H}_1^{*}$ is
$$f_{\alpha}(t)=t^{\alpha}, \quad \re\alpha >- \tfrac{1}{2}$$ (a necessary condition for $f_{\alpha}$ to belong to $L^2(0, 1)$). Here
$$(W f_{\alpha})(u)=e^{-\frac{u}{2}-\alpha u}$$  has Laplace transform equal to $(s+\alpha+\frac{1}{2})^{-1}$, which is outer.
\end{Example}

In order to provide examples of  cyclic vectors for $\mathcal{H}_{\infty}^{*}$, we first note that an invariant subspace for $\mathcal{H}_{\infty}^*$ has the form
\[
\mathcal{M} = \left\{t \mapsto \int_{-\infty}^{\infty} t^{- i x - \frac{1}{2}} f(ix) dx: f \in \mathcal{V}\right\}
\]
where $\mathcal V$ is either $\chi_{iE} L^2(i \R)$ or  $q(ix)H^2_+(i\R)$ for some unimodular $q$.

%We remark here that, formally at least,
%\[
%\int_y^\infty \frac{t^{-ix}}{t} \, dt = \frac{1}{ix+1} y^{-ix},
%\]
%which can be made the basis of a rigorous proof that these spaces are invariant under $\mathcal{H}^*$.

Writing $t=e^u$, with $-\infty < u < \infty$, we have $g \in \mathcal M$ if
\[
g(e^u)=e^{-\frac{u}{2}} \int_{-\infty}^\infty e^{-ixu} f(x) \, dx.
\]

\begin{Example}\label{OOJNDHFSJ}
Here are two examples of cyclic vectors for $\mathcal{H}_{\infty}^*$:
\begin{enumerate}
\item[(a)] ${\displaystyle g(t)=\frac{1}{1 + t}}$;
\item[(b)] $g(t)= \dfrac{1}{\sqrt t(1+(\log t)^2)}$.
\end{enumerate}
To see this, first observe that
\[
e^{\frac{u}{2}}g(e^u) = \begin{cases} \dfrac{e^{\frac{u}{2}}}{e^u + 1} &  \text{in case (a),} \\ \\
 \dfrac{1}{1+u^2} & \text{in case (b)}.
\end{cases}
\]
In each case we can invert the Fourier transform explicitly to obtain
 \[
 f(x) = \begin{cases} \sqrt{\dfrac{\pi}{2}} \sech \pi x &  \text{in case (a),} \\
 \sqrt{\dfrac{\pi}{2}}e^{-|x|} & \text{in case (b)}.
\end{cases}
\]
Each of these functions $f$ is nonzero everywhere, so $g$ cannot lie in a
nontrivial reducing subspace for $\mathcal{H}_{\infty}$, and likewise the $H^2_+(i\RR)$ condition
\begin{equation}\label{logInt}
\int_{-\infty}^{\infty} \frac{|\log |f(iw)|}{1+w^2} dw > -\infty
\end{equation}
(a well known necessary condition for $f$ to be noncyclic for $M_{\phi}$ on $L^2(i\R)$ -- see for example \cite[Cor.~A.6.4.2]{MR1864396}) is {\em not} satisfied, so $g$ does not
lie in a (nontrivial)  non-reducing invariant subspace. Therefore,  both these $g$ are cyclic vectors for $\mathcal{H}_{\infty}^*$.
%(Comment: it is quite hard to find functions for which the Fourier transform can
%be inverted explicitly. Indeed,  $g(t)=e^{-t}$ seems to be very difficult to check.)\\

%It should be possible to do something similar to find cyclic vectors for $H$ - probably by looking at the orthogonal
%complements of the invariant subspaces for $H^*$ -
%but we still have to think about this.

%
%
%A  calculation shows that if
%$$g(t) = \frac{1}{(t + 1)^2},$$
%then
%$$e^{u/2} g(e^u) = \frac{e^{u/2}}{(e^{u} + 1)^2},$$
%which has inverse Fourier transform equal to
%$$\frac{1}{2} \sqrt{\frac{\pi }{2}} (1+2 i x) \text{sech}(\pi  x).$$
%This function does not vanish on a set of positive measure and does not satisfy the log integrability condition and so $g$ is a cyclic vector for $\mathcal{H}_{\infty}^*$.
\end{Example}

The above example can be generalized to the following.
\begin{Example}\label{77cycliciciuccyclicicicc}
For  each $n = 1, 2, \ldots,$ the function
\[
g_n(t) =  \frac{1}{(t+1)^n}
\]
is a cyclic vector for $\mathcal{H}_{\infty}^*$.
This can be proved by induction. We have already seen the case $n=1$ from Example \ref{OOJNDHFSJ}. Let
$F_n$ denote the inverse Fourier transform 
of $$\frac{e^{\frac{u}{2}}}{(e^u+1)^n}$$ so that $F_1(x)=\sqrt{\frac{\pi}{2}}\sech \pi x$.
Evaluating the integral in 
\[
F_n(x)=\frac{1}{\sqrt{2\pi}} \int_{-\infty}^\infty e^{iux}\frac{e^{\frac{u}{2}}}{(e^u+1)^n} \, du
\]
 by parts we have that
\[
F_{n}(x)=-\frac{1}{ix}(-n(F_n(x)-F_{n+1}(x))+ \tfrac{1}{2}F_n(x))
\]
so that
$$F_{n+1}(x)=\frac{1}{n} (F_n(x)(-ix+n-\tfrac{1}{2})).$$
Thus, by induction, $F_{n+1}$ is a polynomial of degree $n$ in $x$ multiplied by $\sech \pi x$ and so
it satisfies the non-vanishing and non-log-integrability condition (recall \eqref{logInt}) for $g_n(t)$ to be a cyclic vector
for $\mathcal{H}_{\infty}^*$.
\end{Example}

%One can continue like this with, for example,
%$$g(t) = \frac{1}{(t + 1)^3}$$ to get that
%$$e^{u/2} g(e^u) = \frac{e^{u/2}}{(e^{u} + 1)^3},$$
%which has inverse Fourier transform equal to
%$$\frac{1}{8} \sqrt{\frac{\pi }{2}} \left(-4 x^2+8 i x+3\right) \text{sech}(\pi  x).$$
%Again, this is a cyclic vector for $\mathcal{H}^{*}$.
%It seems like
%$$g(t) = \frac{1}{(t +1)^n}, \quad n = 0, 1, 2, \cdots$$
%will all be cyclic vectors for $\mathcal{H}_{\infty}^{*}$.

\begin{Example}\label{Euldferefectsionforms}
One can also consider more rapidly decreasing functions such as
$$g(t) = e^{-t}$$
as candidates for cyclic vectors for $\mathcal{H}_{\infty}^{*}$.
Here
$$e^{\frac{u}{2}} g(e^u) = e^{\frac{u}{2}} e^{-e^u}$$
which has inverse Fourier transform equal to
$$\frac{\Gamma \left(\frac{1}{2}-i x\right)}{\sqrt{2 \pi }}$$
(just write out the integral formula for the Gamma function to see this Fourier transform detail).
This function $g$  does not vanish on any set of positive measure and so $g$ is a $\ast$-cyclic vector for $\mathcal{H}_{\infty}$. To see that $g$ is a {\em cyclic} vector for $\mathcal{H}_{\infty}^{*}$, we use the Euler reflection formula \cite[p.~199]{MR510197}
$$\Gamma(z) \Gamma(1 - z) = \frac{\pi}{\sin \pi z}$$
with $z = \frac{1}{2} - i x$ to see that
$$\Gamma(\tfrac{1}{2}-ix) \Gamma(\tfrac{1}{2}+ix) =  \frac{\pi}{\sin \pi(\tfrac{1}{2}-ix)} = \frac{\pi}{\cosh(\pi x)}.$$

The two factors on the left are complex conjugates of each other and so
$|\Gamma(\frac{1}{2}-ix)|$ decreases like $e^{-\frac{x}{2}}$, which eliminates the possibility of  log integrability from \eqref{logInt}.
 \end{Example}

Let's us make the following observation about the cyclic vectors for $\mathcal{H}_{\infty}$ and those for $\mathcal{H}_{\infty}^{*}$. 

\begin{Proposition}\label{share}
For $f \in L^2(0, \infty)$ the following are equivalent. 
\begin{enumerate}
\item[(a)] $f$ is a cyclic vector for $\mathcal{H}_{\infty}$. 
\item[(b)] $f $ is a cyclic vector for $\mathcal{H}_{\infty}^{*}$.
\end{enumerate}
\end{Proposition}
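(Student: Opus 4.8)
The plan is to exploit the fact, established in Proposition~\ref{yyysysSYY} and its Corollary, that both $I-\mathcal{H}_{\infty}^{*}$ and $I-\mathcal{H}_{\infty}$ are unitarily equivalent to the \emph{same} multiplication operator $M_{\phi}$ on $L^2(i\R)$; indeed, the Corollary already records that $I-\mathcal{H}_{\infty}$ is unitarily equivalent to $M_{\phi}$ via the unitary $U\colon f(ix)\mapsto f(-ix)$ composed with $\mathcal{Q}$. First I would observe that cyclicity of $f$ for $\mathcal{H}_{\infty}$ is equivalent to cyclicity of $f$ for $I-\mathcal{H}_{\infty}$ (the closed invariant subspaces of a bounded operator $A$ and of $\lambda I + A$ coincide, and the orbit $(A^n f)$ and $((\lambda I + A)^n f)$ span the same closed subspace), and similarly cyclicity for $\mathcal{H}_{\infty}^{*}$ is equivalent to cyclicity for $I-\mathcal{H}_{\infty}^{*}$. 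So it suffices to prove that the cyclic vectors of $M_{\phi}$ on $L^2(i\R)$ and of $M_{\bar\phi}=M_{\phi}^{*}$ on $L^2(i\R)$ coincide, after transporting the question back and forth through $\mathcal{Q}$ and $U$.

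Next, for the multiplication-operator statement itself I would use the description of $M_{\phi}$-invariant subspaces coming from the Wiener--Helson theory already invoked in the paper: via the unitary $J$ of \eqref{eq:Jh2h2} the operator $M_{\phi}$ on $L^2(i\R)$ is the bilateral shift on $L^2(\T,m)$, whose closed invariant subspaces are of Wiener type $\chi_{iE}L^2(i\R)$ or of Helson type $q\,H^2_{+}(i\R)$ with $q$ unimodular. A vector $g$ fails to be cyclic for $M_{\phi}$ iff it lies in a proper subspace of one of these two types, i.e.\ iff either $g$ vanishes on a set of positive measure, or the function $ix\mapsto g(ix)$ has $\log|g|$ integrable against the Poisson kernel $(1+w^2)^{-1}$ on $i\R$ (the condition \eqref{logInt} for membership in a proper Helson-type subspace). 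Now the point is that both of these obstructions are invariant under the map $g(ix)\mapsto \overline{g(-ix)}$: vanishing on a positive-measure set $iE$ is equivalent to vanishing on $-iE$, which again has positive measure; and $|g(-ix)|=|g(ix)|$ after the substitution $w\mapsto -w$, so the Poisson-log-integral in \eqref{logInt} is unchanged (the substitution also showing $H^2_{+}$ becomes $H^2_{-}$, consistent with the conjugation $C$ of \eqref{Ceeee}). Hence $g$ is cyclic for $M_{\phi}$ iff $ix\mapsto\overline{g(-ix)}$ is cyclic for $M_{\phi}$ iff $ix\mapsto g(-ix)$ is cyclic for $M_{\bar\phi}$, which is exactly what transporting through $U$ requires.

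Assembling the pieces: given $f\in L^2(0,\infty)$, set $g=\mathcal{Q}f\in L^2(i\R)$. Then $f$ is cyclic for $\mathcal{H}_{\infty}^{*}$ iff $f$ is cyclic for $I-\mathcal{H}_{\infty}^{*}$ iff (Proposition~\ref{yyysysSYY}) $g$ is cyclic for $M_{\phi}$ on $L^2(i\R)$; and $f$ is cyclic for $\mathcal{H}_{\infty}$ iff $f$ is cyclic for $I-\mathcal{H}_{\infty}$ iff (the Corollary to Proposition~\ref{yyysysSYY}) $ix\mapsto g(-ix)$ is cyclic for $M_{\phi}$, equivalently $g$ is cyclic for $M_{\bar\phi}$. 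By the symmetry argument of the previous paragraph these two conditions on $g$ are equivalent, so (a) and (b) are equivalent. I expect the main obstacle to be purely bookkeeping: keeping straight which unitary ($\mathcal{Q}$, $J$, $C$, $U$, $T$, $W$) carries which space where, and making sure the two ``obstruction'' conditions (positive-measure zero set and Poisson-log-integrability) genuinely characterize non-cyclicity for the bilateral shift — i.e.\ that a vector lying in \emph{no} proper Wiener-type and \emph{no} proper Helson-type invariant subspace is cyclic, which is the content of the Wiener--Helson classification and should be cited rather than reproved. A cleaner, essentially equivalent route, which I would mention as an alternative, is simply to note that $M_{\phi}$ and $M_{\bar\phi}$ are unitarily equivalent (via $C$ of \eqref{Ceeee}, or via complex conjugation combined with $C$) and that unitarily equivalent operators have ``the same'' cyclic vectors in the appropriate transported sense — but one must still check that the transporting unitary sends $\mathcal{Q}f$ to $\mathcal{Q}$ of the \emph{same} $f$, which is precisely the role of the Corollary's unitary $f(ix)\mapsto f(-ix)$ together with the real-valued change of variables on $(0,\infty)$; this is the only spot where something genuinely needs to be verified rather than quoted.
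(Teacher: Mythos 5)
Your argument is correct and follows essentially the same route as the paper's proof: transport the question through $\mathcal{Q}$ to $M_{\phi}$ versus $M_{\bar\phi}$ on $L^2(i\R)$, invoke the Wiener--Helson description of the invariant subspaces, and observe that both obstructions to cyclicity (a positive-measure zero set, membership in a simply invariant subspace) are preserved by the reflection $C\colon g(ix)\mapsto g(-ix)$ of \eqref{Ceeee}. The only cosmetic difference is that you phrase the Helson-type obstruction via the log-integral condition \eqref{logInt} where the paper keeps it as non-membership in $qH^{2}_{+}(i\R)$ versus $pH^{2}_{-}(i\R)$, and you spell out the harmless reduction from $\mathcal{H}_{\infty}$ to $I-\mathcal{H}_{\infty}$ that the paper leaves implicit.
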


\begin{proof}
From our discussion from Proposition \ref{yyysysSYY} and \eqref{Ceeee}, recall that if $C: L^2(i \R) \to L^2(i \R)$ is defined by $(C g)(ix) = g(-ix)$, then $C$ is unitary (and is it own adjoint) and $C M_{\phi} C = M_{\overline{\phi}}$. Furthermore, $C H^2_{+}(i \R) = H^{2}_{-}(i \R)$.

Now observe that $f \in L^2(0, \infty)$ is cyclic for $\mathcal{H}_{\infty}^{*}$ if and only if $\mathcal{Q} f$ is cyclic for $M_{\phi}$ -- which is precisely the condition that $\mathcal{Q} f$ is nonvanishing almost everywhere on $i\R$ and that $\mathcal{Q} f \not \in q H^2_{+}(i\R)$ for any unimodular  function $q$ on $i \R$. The conditions for a function in $L^{2}(i \R)$ to be cyclic for $M_{\overline{\phi}}$ are that it is nonvanishing almost everywhere on $i \R$ and does not belong to any space $p H^{2}_{-}(i \R)$, where $p$ is unimodular on $i \R$. 

Our earlier discussion  said  that $\mathcal{Q} f$ is cyclic for $M_{\phi}$ if and only if $C \mathcal{Q} f$ is cyclic for $M_{\overline{\phi}}$. But $\mathcal{Q} f$ vanishes almost everywhere if and only if $C \mathcal{Q} f$ vanishes almost everywhere and $\mathcal{Q} f$ does not belong to $q H^2_{+}(i \R)$ if and only if $C \mathcal{Q} f$ does not belong to $(C q) H^{2}_{-}(i\R)$. Thus, $f$ is cyclic for $\mathcal{H}_{\infty}^{*}$ if and only if $f$ is cyclic for $\mathcal{H}_{\infty}$. 
\end{proof}

We conclude that the cyclic vectors for $\mathcal{H}_{\infty}^{*}$ from Examples \ref{OOJNDHFSJ}, \ref{77cycliciciuccyclicicicc}, and \ref{Euldferefectsionforms} are also cyclic for $\mathcal{H}_{\infty}$. 

\section{Invariant subspaces for $\mathcal{H}_{\infty}$ and semigroups}

Following the work of Arvanitidis and Siskakis \cite{MR3043050}, we consider  the weighted composition semigroup $\{C_t\}_{t\geq 0}$ on $L^2(0,\infty)$ defined by
\[
(C_t f)(x)=  e^{-\frac{t}{2}} f(e^{-t} x), \quad 0 \leq x < \infty.
\]
Then, using the substitution $y=e^{-t}x$, we obtain
\begin{align*}
\int_0^\infty (C_t f)(x) e^{-\frac{t}{2}} \, dt & = \int_0^\infty e^{-t}  f(e^{-t} x) \, dt\\
& = \frac{1}{x} \int_0^x f(y) \, dy \\
&= (\mathcal{H}_{\infty} f)(x).
\end{align*}
One can also verify  that $\|C_{t} f\| = \|f\|$ for all $f \in L^2(0, \infty)$ and thus $\{C_t\}_{t \geq 0}$ is a contractive semigroup (which will be important in a moment). The connection between $\{C_t\}_{t \geq 0}$ and the invariant subspaces for $\mathcal{H}_{\infty}$ is the following.

\begin{Proposition}
A subspace of $L^2(0, \infty)$ is invariant for $\mathcal{H}_{\infty}$ if and only if it is invariant under every $C_t$, $t \geq 0$.
\end{Proposition}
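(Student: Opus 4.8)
The plan is to exploit the integral representation $\mathcal{H}_{\infty} = \int_0^\infty C_t\, e^{-t/2}\, dt$ established just above, together with the fact that $\{C_t\}_{t\geq0}$ is a strongly continuous contractive one‑parameter semigroup whose generator is (up to a shift) related to $\mathcal{H}_\infty$. The ``if'' direction is almost immediate: if a (closed) subspace $\mathcal{M}$ is invariant under every $C_t$, then for $f\in\mathcal{M}$ the vector‑valued integral $\int_0^\infty e^{-t/2} C_t f\,dt$ is a norm limit of Riemann sums of elements of $\mathcal{M}$, hence lies in $\mathcal{M}$; that integral is exactly $\mathcal{H}_\infty f$, so $\mathcal{H}_\infty\mathcal{M}\subseteq\mathcal{M}$. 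One only needs to note that strong continuity of $t\mapsto C_t f$ and the integrability of $e^{-t/2}$ make the Bochner integral well defined and approximable by $\mathcal{M}$‑valued sums.

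For the ``only if'' direction I would pass to the Mellin/transform side. By Proposition~\ref{yyysysSYY}, $I-\mathcal{H}_\infty^{*}$ is unitarily equivalent via $\mathcal{Q}$ to $M_\phi$ on $L^2(i\R)$, and one checks directly (using the substitution $t=e^u$ as in Proposition~\ref{inversion lemma}) that $\mathcal{Q} C_t \mathcal{Q}^{-1}$ is the unitary multiplication operator $M_{e^{-itx}}$ on $L^2(i\R)$, i.e.\ conjugating the semigroup through $T$ turns $C_t$ into the translation group on $L^2(\R)$, which the Fourier transform diagonalizes as multiplication by $e^{-it\beta}$. Since $\mathcal{H}_\infty$ and all the $C_t$ are simultaneously functions (in the sense of the Borel functional calculus, or just the bounded‑operator functional calculus applied to the normal operator $M_{-ix}$ generating the group) of the single operator $A := \mathcal{Q}^{-1} M_{-ix}\mathcal{Q}$, a subspace is invariant for $\mathcal{H}_\infty$ iff it is invariant for the whole von Neumann algebra they generate. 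Concretely: the transform of $\mathcal{H}_\infty$ is multiplication by $m(x) = \int_0^\infty e^{-t/2} e^{-itx}\,dt = (\tfrac12 + ix)^{-1}$, a bounded injective function on $i\R$; an $M_m$‑invariant subspace of $L^2(i\R)$ is automatically invariant under $M_g$ for every $g$ in the weak‑$*$ closed subalgebra of $L^\infty(i\R)$ generated by $m$, and since $m$ separates points and is nonconstant this closed algebra is all of $L^\infty(i\R)$ (by Stone--Weierstrass together with the fact that $\overline m = \overline{(\tfrac12+ix)^{-1}}$ is obtained from $m$ as $(\tfrac12+ix)^{-1}\overline{(\tfrac12+ix)^{-1}}/\ldots$ — more cleanly, $m$ and $1/m = \tfrac12+ix$ generate, and translations $e^{-it\beta}$ lie in the closure). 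In particular $\mathcal{M}$ is invariant under $M_{e^{-itx}} = \mathcal{Q} C_t \mathcal{Q}^{-1}$ for all $t$, which transfers back to $C_t$‑invariance of the original subspace.

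The step I expect to be the real obstacle is the functional‑calculus/algebra‑generation claim: showing that $M_m$‑invariance forces $M_{e^{-itx}}$‑invariance. The cleanest route is probably to avoid Stone--Weierstrass on the circle and argue directly on $L^2(\R)$ after Fourier transform, where $\mathcal{H}_\infty$ becomes the convolution operator $h\mapsto (e^{-s/2}\chi_{(0,\infty)}(s))* h$ and $C_t$ becomes translation by $t$; then a subspace of $L^2(\R)$ invariant under convolution by the single function $k(s)=e^{-s/2}\chi_{(0,\infty)}(s)$ is invariant under convolution by every function in the closed (in a suitable sense) translation‑invariant algebra generated by $k$, and since $\widehat k(x)=(\tfrac12+ix)^{-1}$ is nowhere zero and its translates span a dense set of frequencies, one recovers all translations $C_t$ in the strong operator topology. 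Making ``the algebra generated by $k$ contains all translations'' precise — e.g.\ via the Wiener Tauberian theorem, using $\widehat k\neq0$ everywhere — is where the argument must be handled carefully; everything else is routine change of variables and the Paley--Wiener/Mellin dictionary already set up in \S 2.
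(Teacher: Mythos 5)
Your ``if'' direction is correct: $\mathcal{H}_\infty f=\int_0^\infty e^{-t/2}C_tf\,dt$ is a Bochner integral of a continuous, integrable $\mathcal{M}$-valued function, hence a norm limit of $\mathcal{M}$-valued Riemann sums, so a closed subspace invariant under every $C_t$ is invariant under $\mathcal{H}_\infty$. The genuine gap is in the converse, exactly at the step you flagged. On the transform side $\mathcal{H}_\infty$ becomes $M_m$ with $m(i\beta)=1-\overline{\phi(i\beta)}=(\tfrac12-i\beta)^{-1}$ and $C_t$ becomes $M_{g_t}$ with $g_t(i\beta)=e^{-t/2}e^{it\beta}$ (up to your Fourier convention); both symbols are boundary values of functions bounded and analytic on one and the same half-plane. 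Hence the weak-$*$ closed unital algebra generated by $m$ sits inside $H^\infty_-(i\R)$ and is emphatically \emph{not} all of $L^\infty(i\R)$: if it were, every $\mathcal{H}_\infty$-invariant subspace would be reducing, contradicting the non-reducing invariant subspaces in \eqref{eq:nonreducingh}. Stone--Weierstrass is unavailable because the weak-$*$ closed algebra is not self-adjoint (it does not contain $\overline m$); ``$m$ and $1/m$ generate'' is not a legitimate move since $1/m$ is unbounded and $M_{1/m}$ is not a bounded operator; and the Wiener Tauberian route fails twice over --- you only have convolution powers of $k$, not its translates, and recovering \emph{all} translations would again prove too much, since the backward translations $C_{-t}$ do not preserve the subspaces of type $qH^2_{\pm}$.

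The conclusion you need is nonetheless true, and the correct justification is the $H^\infty$ one: $m=1-\bar\phi$, polynomials in $\bar\phi$ are weak-$*$ dense in $H^\infty_-(i\R)$ (the half-plane transcription of the weak-$*$ density of analytic polynomials in $H^\infty(\D)$), weak-$*$ convergence of symbols gives WOT convergence of the multiplication operators, and a closed subspace invariant under $M_m$ is therefore invariant under $M_g$ for every $g\in H^\infty_-(i\R)$ --- in particular under $M_{g_t}$ for $t\geq 0$. Equivalently, one can quote Wiener--Helson for the invariant subspaces of $M_{\bar\phi}$ and check directly that both $\chi_{iE}L^2(i\R)$ and the $qH^2$-type subspaces are preserved by these analytic multipliers. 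The paper itself sidesteps all of this with a softer argument: $\{C_{2t}\}_{t\geq0}$ is a contractive $C_0$-semigroup whose cogenerator $W$ satisfies $W-I=-\mathcal{H}_\infty$, and the Sz.-Nagy--Foias theorem cited there says the invariant subspaces of the cogenerator of a contraction semigroup coincide with the common invariant subspaces of the semigroup; no classification of invariant subspaces is needed. You should either adopt that route or replace your algebra-generation claim with the $H^\infty$ density argument above.
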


Let us outline the proof of this using some well known results about semigroups. Recall that for a given $C_0$-semigroup  $\{T(t)\}_{t\geq0}$ of bounded linear operators on a Hilbert space $H$, there exists a closed and densely defined linear transformation $A$  called the {\em generator} of $\{T(t)\}_{t\geq0}$,  that determines the semigroup. It is defined by
\begin{equation*}
A \vec{x}:=\lim_{t\to0^+}\frac{T(t) \vec{x}-\vec{x}}{t},
\end{equation*}
where the domain $\mathcal{D}(A)$ of $A$ consists of all $\vec{x}\in H$ for which this limit exists (see \cite[Chapter II]{MR2229872}, for instance). It is well-known that the generator is, in general, an unbounded operator which determines the $C_0$-semigroup.

Likewise, if $1$ belongs to the resolvent of $A$, namely, the set
$$\rho(A)=\{\lambda \in \mathbb{C}:\; (A-\lambda I): \mathcal{D}(A)\subset H\to H \mbox{ is bijective}\},$$
then, by the closed graph theorem,  $(A-I)^{-1}$ is a bounded operator on $H$ and the
Cayley transform of $A$, defined by
\begin{equation*}\label{eq:cogenerator}
V:=(A+I)(A-I)^{-1},
\end{equation*}
is a bounded operator on $H$ since $V-I=2(A-I)^{-1}$. Note that $V$ determines the semigroup uniquely, since $A$ does. This operator is called the \emph{cogenerator} of the $C_0$-semigroup $\{T(t)\}_{t\geq0}$. Observe that 1 is not an eigenvalue of $V$.

In our setting, $T(t)$ is equal to $C_t$ on $L^2(0, \infty)$ and a straightforward computation yields that
$$Af(x)=-x f'(x)-\tfrac12 f(x)$$
for those $f\in L^2(0,\infty)$ for which $xf' \in L^2(0, \infty)$.
% (sometimes denoted by the Sobolev space notation $W^{1, 2}(\R)$).
Likewise, it holds that $1\in \rho(A)$ and, having in mind that the resolvent can be expressed in terms of the Laplace transform of the semigroup (see \cite[Thm.~11, p.~622]{MR1009162}), we note that
\[
((A-\tfrac12 I)^{-1} f)(x)= -\int_0^\infty e^{-\frac{t}{2}} (C_{t}f)(x) \, dt=-(\mathcal{H}_{\infty} f)(x),  \quad 0 < x < \infty.
\]
Accordingly, if we write $W=(2A+I)(2A-I)^{-1}$, the cogenerator of the semigroup $\{C_{2t}\}_{t \geq 0}$ we have that
$$W-I = (A-\tfrac12 I)^{-1}=-{\mathcal H}_\infty,$$
and, using the fact that $\{C_{2t}\}_{t \geq 0}$ is a contractive semigroup (discussed earlier), the common invariant subspaces for the semigroup $\{C_t\}_{t\geq 0}$ are precisely those for ${\mathcal H}_\infty$ \cite[p.~146]{MR629828}.

This approach was used in \cite{MR4757014} in the context of a different semigroup
to obtain information about the invariant subspaces of the discrete Ces\`aro operator  on $H^2$ (see also \cite{GPR} for more on invariant subspaces of the Ces\`aro operator). Here
the fact that the semigroup $\{C_t\}_{t\geq 0}$ consists of dilations enables us
 to identify
some new invariant subspaces for ${\mathcal H}_\infty$.

\begin{Example}\label{bhsdf7gsdfds7f}
Consider the subspace $\mathcal{M}$  consisting of all  functions belonging to $L^2(0, \infty)$ that are constant almost everywhere on $[0, 1]$. One can see that $\mathcal{M}$ is the closed linear span of $\{C_t \chi: t \geq 0\}$, where $\chi$ is the characteristic function on $[0, 1]$. Clearly $C_{t} \mathcal{M} \subset \mathcal{M}$ for all $0 \leq t < \infty$ and one can also see that $\mathcal{M}$ is also $\mathcal{H}_{\infty}$-invariant. 

%Given a sequence $(a_n)_{n\geq 0}$ with $0=a_0<a_1<a_2<\ldots$ and an increasing
%sequence of positive integers $(d_n)_{n\geq 0}$, consider the closed subspace
%\[
% \{ f \in L^2(0,\infty): f|_{ (a_n,a_{n+1})} \hbox{ is a polynomial of degree} \leq d_n \hbox{ for all } n\}.
%\]
%It is easily verified that  this is invariant for every $C_t$ and thus, by the discussion above,  is an invariant subspace of ${\mathcal H}_\infty$.
\end{Example}

\begin{Example}\label{100ooOO00}
As a generalization of the previous example, take a fixed  $A>0$ along with a finite set of monomials
$x^{a_1}, \ldots, x^{a_n}$ with $\re a_k>-\tfrac{1}{2}$ for each $k$.
Let $\mathcal{M}$ be the subspace consisting of all $L^2(0,\infty)$
functions whose restriction to $[0,A]$ is a linear span of these monomials.
Note that $\mathcal{M}$ is invariant for all $C_t$ and for $\mathcal{H}_{\infty}$. 
\end{Example}

In \eqref{TTTTisoisoms} we introduced the unitary operator $T: L^2(0,\infty) \to L^2(\R)$ defined by
\[
(Tf)(x)=f (e^x) e^{\frac{x}{2}}, \quad -\infty < x < \infty,
\]
with
\[
(T^{-1} g)(y)=\frac{g(\log y)}{\sqrt{y}}.
\]
Let us calculate $T C_t T^{-1}$ an operator on $L^2(\R)$.
For each $g \in L^2(\R)$ we have 
\begin{align*}
(TC_t T^{-1} g)(x) &= ( C_tT^{-1} g)(e^x) e^{\frac{x}{2}} \\
&= (T^{-1}g)(e^{-t}e^x) e^{-\frac{t}{2}}e^{\frac{x}{2}} \\
&= \frac{g(x-t) }{e^{\frac{x}{2}}} e^{-\frac{t}{2}}e^{\frac{x}{2}}\\
&= e^{-\frac{t}{2}} g(x-t),
\end{align*}
so that $C_t$ transforms into a multiple of a standard right shift on $L^2(\R)$.

By the Beurling--Lax theorem, the common invariant subspaces % $\mathcal{M}$
of $T C_t T^{-1}$ are functions with Fourier transform
$\chi_E L^2(\R)$ or $q H^2(U)$, where $U$ is the upper half-plane and $q$ is unimodular on $\R$. So we come back to the same conclusions by a
different route, since in $L^2(0,\infty)$ the invariant subspaces of ${\mathcal H}_\infty$ are of the form $T^{-1}\mathcal{M}$.

%One might look askance at the above and think these can't possibly be $\ast$-cyclic vectors for
%$$(H f)(x) = \frac{1}{x} \int_{0}^{x} f(t) dt$$ since any power of $H$ applied to one of the above vectors will vanish on $[0, 1]$ (since the vector vanishes on $[0, 1]$. Indeed, this is true until you realize that one needs to take powers of $H^{*}$ of these vectors as well and these will not vanish on $[0, 1]$. For example, for
%$f(t) = t^{-3/2} \chi_{(1, \infty)}$, one checks that
%\begin{align*}
%(H^{*} f)(x) & = \int_{x}^{\infty} \frac{f(t)}{t}dt\\
%& = \begin{cases} \int_{1}^{\infty} \frac{f(t)}{t} dt & 0 \leq x \leq1\\
%\int_{x}^{\infty} \frac{f(t)}{t} dt & x \geq 1
%\end{cases}\\
%& = \begin{cases}
%\frac{2}{3} & 0 \leq x \leq 1\\
%\frac{2}{3} x^{-3/2} & x \geq 1
%\end{cases}
%\end{align*}

%From here one can see the following.

%\begin{Proposition}\label{prop:111}
%For $f \in L^2(0, \infty)$ the following are equivalent.
%\begin{enumerate}
%\item $f$ is a $\ast$-cyclic vector for $H$.
%\item There is an $h \in L^2(i \R)$ that does not vanish on any set of positive measure such that
%$$f(t) = \frac{1}{\sqrt{2 \pi}} \int_{-\infty}^{\infty} t^{-ix - \frac{1}{2}} h(i x)dx.$$
%\item the function $$\beta \mapsto \int_{-\infty}^{\infty} f(e^{x}) e^{(i \beta + \frac{1}{2}) x}dx$$ does not vanish on any  subset of $\R$ of positive measure.
%\end{enumerate}
%\end{Proposition}

\section{The commutants}

In this section we provide explicit expressions for those bounded linear operators commuting with the Hardy operators $\mathcal{H}_1$ and $\mathcal{H}_{\infty}$.
Our description makes use of the identity 
$$\Q(I-\mathcal{H}_{\infty}^*)\Q^{-1}=M_\varphi,$$ where $\Q=\F^{-1}T$ is unitary, so that
$$\Q(I-\mathcal{H}_\infty)\Q^{-1}= M^*_\varphi=M_{\bar\varphi}.$$ 
Recall our discussion from \S \ref{sectiosn333}. This, along with the well known fact \cite[p.~183]{MR4545809} that the commutant of $M_{\phi}$ on $L^2(i\R)$
consists of all multiplication operators $M_g$ with $g \in L^\infty(i\R)$, yields the following.

\begin{Proposition}\label{Commutant H infty}
A bounded linear operator $A$ in $L^2(0,\infty)$ commutes with $\mathcal{H}_{\infty}$ if and only if there exists a $g \in L^{\infty}(i \R)$ such that
\begin{equation}\label{AAAAllalalaaaa}
(Af)(t)=\frac{1}{{2\pi}}\int_{-\infty}^{\infty} t^{-iy-\frac{1}{2}}g(iy)\Big( \int_0^\infty   f(u) u^{iy-\frac{1}{2}}du\Big) dy
\end{equation}
for all $f \in L^2(0, \infty)$.
\end{Proposition}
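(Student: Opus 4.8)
The plan is to transport the known description of the commutant of $M_\phi$ on $L^2(i\R)$ back to $L^2(0,\infty)$ via the unitary $\mathcal{Q} = \mathcal{F}^{-1}T$. By Proposition \ref{yyysysSYY} (and the ensuing corollary), $\mathcal{Q}(I-\mathcal{H}_\infty)\mathcal{Q}^{-1} = M_{\bar\phi} = M_\phi^*$. Since $g \mapsto \bar g$ is a bijection of $L^\infty(i\R)$, the commutant of $M_\phi^*$ coincides with the commutant of $M_\phi$, which by the cited fact \cite[p.~183]{MR4545809} is exactly $\{M_g : g \in L^\infty(i\R)\}$. Therefore a bounded $A$ on $L^2(0,\infty)$ commutes with $\mathcal{H}_\infty$ if and only if $\mathcal{Q}A\mathcal{Q}^{-1}$ commutes with $M_\phi$, i.e. if and only if there is $g \in L^\infty(i\R)$ with $A = \mathcal{Q}^{-1}M_g\mathcal{Q}$.

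The remaining work is purely computational: unwind $A = \mathcal{Q}^{-1}M_g\mathcal{Q}$ into the integral formula \eqref{AAAAllalalaaaa}. First I would apply $\mathcal{Q}$ to $f$, using the definition of the Mellin transform to get $(\mathcal{Q}f)(iy) = \frac{1}{\sqrt{2\pi}}\int_0^\infty f(u)u^{iy-\frac12}\,du$. Then multiply by $g(iy)$, and finally apply $\mathcal{Q}^{-1}$ using the formula \eqref{Mellintrack}, namely $(\mathcal{Q}^{-1}h)(t) = \frac{1}{\sqrt{2\pi}}\frac{1}{\sqrt t}\int_{-\infty}^\infty t^{-ix}h(ix)\,dx = \frac{1}{\sqrt{2\pi}}\int_{-\infty}^\infty t^{-ix-\frac12}h(ix)\,dx$. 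Composing these three steps produces exactly
\[
(Af)(t) = \frac{1}{2\pi}\int_{-\infty}^\infty t^{-iy-\frac12} g(iy)\Big(\int_0^\infty f(u)u^{iy-\frac12}\,du\Big)dy,
\]
the two factors of $\frac{1}{\sqrt{2\pi}}$ combining to the $\frac{1}{2\pi}$ in front.

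There is no serious obstacle here; the only point requiring a modicum of care is the direction of the argument — one must note that commuting with $\mathcal{H}_\infty$ is equivalent to commuting with $I - \mathcal{H}_\infty$ (trivially, since they differ by the identity), and hence, after conjugating by $\mathcal{Q}$, equivalent to commuting with $M_{\bar\phi}$; and then that the commutant of $M_{\bar\phi}$ equals the commutant of $M_\phi$ because $M_{\bar\phi} = M_\phi^*$ and, more to the point, because $\{M_g\}$ is a $*$-closed algebra (or simply because $M_{\bar\phi}$ generates the same von Neumann algebra). One should also record that the map $g \mapsto M_g$ is injective, so the representation in \eqref{AAAAllalalaaaa} is essentially unique, though the statement as given does not demand this. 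I would present the proof as: (i) the unitary equivalence reduces the problem to the commutant of $M_\phi$ on $L^2(i\R)$; (ii) cite \cite[p.~183]{MR4545809} for that commutant; (iii) carry out the three-line substitution above to reach \eqref{AAAAllalalaaaa}.
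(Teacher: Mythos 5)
Your proof is correct and follows essentially the same route as the paper: conjugate by the Mellin unitary $\mathcal{Q}=\mathcal{F}^{-1}T$ to reduce the problem to the commutant of $M_{\phi}$ (equivalently $M_{\bar\phi}$) on $L^2(i\R)$, cite the standard fact that this commutant is $\{M_g: g\in L^{\infty}(i\R)\}$, and then unwind $A=\mathcal{Q}^{-1}M_g\mathcal{Q}$ into the integral formula \eqref{AAAAllalalaaaa}. The only cosmetic difference is that you make explicit why the commutants of $M_{\phi}$ and $M_{\bar\phi}$ coincide, a point the paper passes over silently.
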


\begin{proof}
If $A: L^2(0,\infty) \to L^2(0,\infty)$ commutes with $\mathcal{H}_{\infty}$,
we have
$$A(I-\mathcal{H}_{\infty})=(I-\mathcal{H}_{\infty})A$$ and so
\[
(\Q A \Q^{-1})(\Q (I-\mathcal{H}_{\infty})\Q^{-1})=(\Q (I-\mathcal{H}_{\infty})\Q^{-1})(\Q A \Q^{-1}),
\]
and hence we can write
\[
A= \Q^{-1}M_g \Q = T^{-1}\F M_g \F^{-1}T.
\]
%There are now two ways to proceed: either express $\F M_g \F^{-1}$ as a Fourier multiplier
%acting on $L^2(\R)$, and hence a convolution with a distribution (just one integral), or else write
%everything out fully in terms of two integrals.

We therefore have, for each $f \in L^2(0, \infty)$,
\begin{align*}
(Af)(t)&= t^{-\frac{1}{2}} (\F M_g \F^{-1}Tf) (\log t) \\
&= t^{-\frac{1}{2}}\frac{1}{\sqrt{2\pi}} \int_{\infty}^{\infty} (M_g \F^{-1}Tf)(y)e^{-iy \log t} \, dy\\
&= \frac{1}{\sqrt{2\pi}} \int_{-\infty}^{\infty} t^{-iy-\frac{1}{2}}g(iy) (\F^{-1}Tf)(y) \, dy\\
&=  \frac{1}{{2\pi}}\int_{-\infty}^{\infty} t^{-iy-\frac{1}{2}}g(iy) \int_{-\infty}^{\infty} e^{iyx} (Tf)(x) \, dx \, dy\\
% &=  \frac{1}{{2\pi}}\int_\R t^{-iy-1/2}g(iy) \int_\RR e^{iyx} f(e^x) e^{x/2} \, dx \, dy\\
&= \frac{1}{{2\pi}}\int_{-\infty}^{\infty} t^{-iy-\frac{1}{2}}g(iy) \Big( \int_0^\infty   f(u) u^{iy-\frac{1}{2}}du\Big)dy.
\end{align*}
The above argument can be reversed and thus any $A$ from \eqref{AAAAllalalaaaa} belongs to the commutant of $\mathcal{H}_{\infty}$.
\end{proof}

Similar calculations can be made to find an expression for the commutant of ${\mathcal H_1}$. It is more convenient to work with the adjoint and use  \eqref{eq:h1equiv} in the form
\[
I-{\mathcal H}_1^* =\Q^{-1} M_\phi \Q.
\]
The commutant of $M_\phi$ on $H^2_+(i\R)$ consists of all multiplication operators $M_g$ with
$g \in H_{+}^\infty(i\R)$ (the boundary function of the bounded analytic functions on $\C^{+}$) \cite[p.~122]{MR4545809}, and thus, by \eqref{s0f8oigfjdkafdWW}, the commutant of ${\mathcal H}_1^*$ consists of
all operators
\begin{equation}\label{commutant}
B= \Q^{-1}M_g \Q = W^{-1}\Lap^{-1} M_g \Lap W.
\end{equation}
 Hence, similar as before, but for $f \in L^2(0, 1)$ we have 

\begin{align*}
(Bf)(t)&= t^{-\frac{1}{2}} (\Lap^{-1} M_g \Lap Wf) (-\log t) \\
&= t^{-\frac{1}{2}}\frac{1}{ 2\pi i} \int_{\gamma-i\infty}^{\gamma+i\infty} (M_g \Lap Wf)(s)e^{-s \log t} \, ds\\
&=  \frac{1}{ 2\pi i} \int_{\gamma-i\infty}^{\gamma+i\infty} t^{-s-\frac{1}{2}}g(s)\int_0^\infty e^{-sx} (Wf)(x)   \, dx \, ds\\
&=  \frac{1}{ 2\pi i} \int_{\gamma-i\infty}^{\gamma+i\infty} t^{-s-\frac{1}{2}}g(s)\int_0^\infty e^{-sx-\frac{x}{2}} f(e^{-x})   \, dx \, ds\\
&=  \frac{1}{ 2\pi i} \int_{\gamma-i\infty}^{\gamma+i\infty} t^{-s-\frac{1}{2}}g(s)\int_0^1 f(u)   u^{s-\frac{1}{2}}   \, du \, ds\\
\end{align*}
where $\gamma>0$, as is usual in the definition of the inverse Laplace transform. We state the result as follows.

\begin{Proposition}\label{Commutant H 1}
A bounded linear operator $B$ in $L^2(0,1)$ commutes with $\mathcal{H}_{1}^*$ if and only if there exists a $g \in H_{+}^{\infty}(i\R)$ such that
$$(Bf)(t)= \frac{1}{ 2\pi i} \int_{\gamma-i\infty}^{\gamma+i\infty} t^{-s-\frac{1}{2}}g(s) \Big(\int_0^1 f(u)   u^{s-\frac{1}{2}} du\Big)ds
$$
for all $f \in L^2(0, 1)$, 
where $\gamma>0$.
\end{Proposition}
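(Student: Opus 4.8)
The plan is to exploit the unitary equivalence $I - \mathcal{H}_1^{*} = \mathcal{Q}^{-1} M_\phi \mathcal{Q}$ established in Proposition \ref{98rgegbrfewrfvFF} together with \eqref{s0f8oigfjdkafdWW}, which lets us write $\mathcal{Q} = \mathcal{L} W$, where $W$ is the unitary from \eqref{WWWWWW} and $\mathcal{L}$ is the Laplace transform. Since a bounded operator commutes with $\mathcal{H}_1^{*}$ if and only if it commutes with $I - \mathcal{H}_1^{*}$, conjugation by the unitary $\mathcal{Q}$ turns the commutant of $\mathcal{H}_1^{*}$ into the commutant of $M_\phi$ acting on $H^2_+(i\R)$. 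I would invoke the standard fact, cited in the excerpt as \cite[p.~122]{MR4545809}, that this latter commutant consists precisely of the multiplication operators $M_g$ with $g \in H^\infty_+(i\R)$; here one also uses that $\phi$ is an inner function generating the shift, so its commutant in $H^2_+(i\R)$ is all of $H^\infty_+(i\R)$, not merely the polynomials or powers of $\phi$.

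Next I would unwind the conjugation $B = \mathcal{Q}^{-1} M_g \mathcal{Q} = W^{-1}\mathcal{L}^{-1} M_g \mathcal{L} W$ as in \eqref{commutant}, and compute the composite as an explicit integral transform on $L^2(0,1)$. Reading right to left: $Wf$ is the function $x \mapsto f(e^{-x})e^{-x/2}$ on $(0,\infty)$; applying $\mathcal{L}$ produces $(\mathcal{L}Wf)(s) = \frac{1}{\sqrt{2\pi}}\int_0^\infty e^{-sx}f(e^{-x})e^{-x/2}\,dx$, which by the change of variables $u = e^{-x}$ equals $\frac{1}{\sqrt{2\pi}}\int_0^1 f(u)u^{s-1/2}\,du$ — exactly the formula for $(\mathcal{Q}f)(s)$ from \eqref{sd98gsdf1111100}. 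Multiplying by $g(s)$ and then applying the inverse Laplace transform along a vertical line $\re s = \gamma > 0$ gives $(\mathcal{L}^{-1}M_g\mathcal{L}Wf)(x) = \frac{1}{2\pi i}\int_{\gamma - i\infty}^{\gamma + i\infty} e^{sx} g(s)\left(\int_0^1 f(u)u^{s-1/2}\,du\right)ds$; finally $W^{-1}$ substitutes $x = -\log t$ and divides by $\sqrt{t}$, producing the displayed formula. This is the chain of equalities already displayed in the excerpt just before the statement, so I would present it compactly and note at the end that every step is an equivalence, so conversely any operator of this form is $\mathcal{Q}^{-1}M_g\mathcal{Q}$ with $g \in H^\infty_+(i\R)$ and hence commutes with $\mathcal{H}_1^{*}$.

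The main obstacle is purely one of rigor in the formal manipulations: the inverse Laplace transform along the contour $\re s = \gamma$, the interchange of the contour integral with the $u$-integral, and the validity of the change of variables need to be justified for general $f \in L^2(0,1)$ rather than a dense subclass. The clean way around this is to observe that none of these analytic subtleties actually need to be resolved at the level of the integral formula: the operator $B$ is \emph{defined} abstractly as $W^{-1}\mathcal{L}^{-1}M_g\mathcal{L}W$, which is manifestly bounded (a composition of a unitary, a unitary, a bounded multiplication operator, and two unitaries), and the integral expression is simply its value, to be interpreted in the appropriate limiting sense (e.g. as a principal-value contour integral, or via density from $f \in L^\infty(0,1)$ where all integrals converge absolutely). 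Thus I would phrase the proposition as an identification of the commutant with $\{W^{-1}\mathcal{L}^{-1}M_g\mathcal{L}W : g \in H^\infty_+(i\R)\}$, display the integral formula as the concrete realization, and remark that the forward direction of the computation establishes the formula on the dense set where Fubini applies, with the general case following by continuity. The only genuinely non-formal input is the cited description of the commutant of the shift/inner multiplier, which we are entitled to assume.
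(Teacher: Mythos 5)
Your proposal is correct and follows essentially the same route as the paper's own argument: conjugate by $\mathcal{Q}=\mathcal{L}W$ to reduce to the commutant of $M_{\phi}$ on $H^{2}_{+}(i\R)$, invoke the known description of that commutant as the multiplication operators $M_g$ with $g\in H^{\infty}_{+}(i\R)$, and unwind $B=W^{-1}\mathcal{L}^{-1}M_g\mathcal{L}W$ into the stated contour-integral formula. Your only addition is the (sensible) remark that the integral should be read as the concrete realization of the manifestly bounded operator $\mathcal{Q}^{-1}M_g\mathcal{Q}$, justified on a dense class; the paper carries out the same formal chain of equalities without further comment.
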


Observe that a similar expression can be derived for the commutant of ${\mathcal H}_1$ by
taking adjoints,
although it now involves the conjugate Toeplitz operator $T_{\overline g}$ on $H^{2}(\C^{+})$.

\section{Frame vectors}\label{section frame}

\subsection{Basic notation}

For a bounded linear operator $T$ a separable Hilbert space $H$, a vector
$\vec{v} \in H$ is a {\em frame vector} for $T$ if the sequence $(T^{n} \vec{v})_{n \geq 0}$
forms a frame. In other words, if there exist constants $c_1,c_2 > 0$ such that
\begin{equation}\label{eq:defframe}
c_1 \|\vec{x}\|^2 \leq \sum_{n=0}^\infty |\langle \vec{x},T^n \vec{v}\rangle|^2 \leq c_2\|\vec{x}\|^2 \; \; \mbox{for all $\vec{x} \in H$}.
\end{equation}

The optimal $c_1, c_2$ above are called the {\em frame bounds}.
It is known (see   \cite[Chap.~4]{MR1228209}) that an equivalent condition for a sequence
$(\vec{v}_k)_{k \geq 0}$ to be a frame on a Hilbert space $H$  is that the mapping from $\ell^2(\N_{0})$ to $H$ defined by
\begin{equation}\label{eq:framemap}
 (a_n)_{n \geq 0} \mapsto \sum_{n=0}^\infty a_n \vec{v}_n
\end{equation}
is bounded and  surjective.

\subsection{Frame vectors for $S$ and $M_{\phi}$}Recently, Cabrelli, Molter, Paternostro, and Philipp  studied frame vectors for operators in \cite{MR4093918}. In particular,
\cite[Thm.~3.3]{MR4093918} asserts that for $T$ to possess a frame vector
it is  necessary that $T^*$ be similar to a strongly stable contraction, namely,
$$\lim_{n \to \infty} \|T^{*n}\vec{y}\| = 0 \; \; \mbox{ for all $\vec{y} \in H$.}$$
By the uniform boundedness principle, it follows that such $T$ must satisfy
$$\sup_{n \geq 0} \|T^n\| < \infty$$
and hence, by the spectral radius formula, $\sigma(T) \subseteq \overline{\D}$.
Thus, we cannot hope to find frame vectors for the Hardy operators $\mathcal{H}_1$ and $\mathcal{H}_{\infty}$, since, as discussed in the introduction, their spectra are not contained
in $\overline{\D}$.
However, also discussed in the introduction, we know that $I-\mathcal{H}_1^*$ is unitarily equivalent to the
unilateral shift $S$ on $H^2$ and $I-\mathcal{H}_{\infty}^*$ is equivalent to the a bilateral shift on $L^2(\T)$.
Now $S$ on $H^2$ does possess frame vectors (as we shall demonstrate below), but $S^*$ does not, nor does the  bilateral shift on $L^2(\T)$. These negative results follow from the strong stability
criterion above but can also be proved directly, simply by noting that for such operators $T$ we cannot have
a uniform lower bound
\[
c_1 \|\vec{x}\|^2 \leq \sum_{n=0}^\infty |\langle T^{*n} \vec{x},  \vec{v}\rangle|^2
\]
if we let $\vec{x}$ range over functions $e_k$ with $e_k(z)=z^k$.

The following result is a special case of \cite[Prop.~3.8]{MR4060361}. For the sake of completeness, we provide its short proof.

\begin{Proposition}\label{prop:frameforshift}
For the unilateral shift $(S f)(z) = z f(z)$ on $H^2$ the following are equivalent for $u \in H^2$:
\begin{enumerate}
\item[(a)] $u$ is a frame vector for $S$;
\item[(b)] $u$ and $1/u$ belong to $H^{\infty}$, the bounded analytic functions on $\D$.
\end{enumerate}
\end{Proposition}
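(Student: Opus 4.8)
The plan is to unravel the frame condition \eqref{eq:defframe} directly in terms of the multiplication structure of $H^2$. Writing $u=\sum_{k\geq 0}\widehat u(k)z^k$, the orbit of $u$ under $S$ is $(z^n u)_{n\geq 0}$, and for any $\vec x\in H^2$ the coefficient $\langle \vec x, z^n u\rangle$ is the $n$-th Taylor coefficient of $\overline{u(0)}$-type products; more usefully, I would observe that for $\vec x\in H^2$,
\[
\sum_{n=0}^\infty |\langle \vec x, z^n u\rangle|^2 = \|P_{H^2}(\overline{u}\,\vec x)\|^2_{\text{appropriate sense}},
\]
and then recognize the right interpretation: the analysis operator $\vec x\mapsto (\langle \vec x,z^n u\rangle)_n$ is, after identifying $\ell^2(\N_0)$ with $H^2$, exactly the Toeplitz operator $T_{\overline u}$ (composed with the flip $z^n\mapsto$ coefficient sequence). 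So the frame inequalities \eqref{eq:defframe} say precisely that $T_{\overline u}$ is bounded below, i.e.\ $c_1\|\vec x\|^2\leq \|T_{\overline u}\vec x\|^2\leq c_2\|\vec x\|^2$ for all $\vec x\in H^2$. (Alternatively I can use the frame-map characterization \eqref{eq:framemap}: the synthesis map $(a_n)\mapsto \sum a_n z^n u = u\cdot(\sum a_n z^n)$ is $M_u$ precomposed with the natural identification $\ell^2\to H^2$, so $(z^n u)_n$ is a frame iff $M_u$ is bounded and surjective.)

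From here the proof splits into the two implications. For (b)$\Rightarrow$(a): if $u,1/u\in H^\infty$ then $M_u:H^2\to H^2$ is bounded with bounded inverse $M_{1/u}$, hence boundedly invertible, hence bounded and surjective; by the characterization \eqref{eq:framemap} the sequence $(z^n u)_{n\geq 0}$ is a frame, so $u$ is a frame vector. For (a)$\Rightarrow$(b): if $u$ is a frame vector, the synthesis map $M_u$ (identifying $\ell^2\cong H^2$) is bounded and surjective. Boundedness of $M_u$ forces $u\in H^\infty$ (a multiplier of $H^2$ is exactly a bounded analytic function, with multiplier norm equal to $\|u\|_\infty$). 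Surjectivity of $M_u:H^2\to H^2$ is the crux: I would argue that $uH^2=H^2$ forces $1\in uH^2$, i.e.\ there is $g\in H^2$ with $ug=1$; then $g=1/u$ is analytic on $\D$, and since $g\in H^2$ and $ug\equiv 1$ one upgrades $g\in H^\infty$ by noting $M_g$ is the inverse of the bounded surjective (hence, by the open mapping theorem and injectivity of $M_u$—which holds because $u\not\equiv 0$—boundedly invertible) operator $M_u$, so $M_g$ is bounded and $1/u=g\in H^\infty$.

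The step I expect to be the main obstacle is the careful justification that surjectivity of $M_u$ yields $1/u\in H^\infty$ rather than merely $1/u\in H^2$ or $1/u$ analytic. The clean way is: $M_u$ is injective (as $u\not\equiv 0$ and $H^2$ consists of analytic functions, so $uf=0$ forces $f=0$), and surjective by hypothesis, hence bijective; by the open mapping theorem $M_u^{-1}$ is bounded; but $M_u^{-1}$ must be $M_{1/u}$ (apply it to the constant function $1$ to identify $1/u=M_u^{-1}1\in H^2$, then check $M_u^{-1}h = h/u$ for all $h$ by multiplying through), and a bounded multiplication operator on $H^2$ has symbol in $H^\infty$. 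One should also double-check the equivalence "surjective frame map $\Leftrightarrow$ frame," which is exactly \cite[Chap.~4]{MR1228209} as quoted in \eqref{eq:framemap}, so no new work is needed there. Assembling these pieces gives both implications and completes the proof.
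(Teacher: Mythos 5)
Your proposal is correct and follows essentially the same route as the paper: both reduce the frame condition via the synthesis-map characterization \eqref{eq:framemap} to the statement that $M_u$ is bounded and surjective, i.e.\ $uH^2 = H^2$, and then invoke the multiplier theory of $H^2$. The only difference is that you spell out the multiplier step (injectivity of $M_u$, the open mapping theorem, and identifying $M_u^{-1}=M_{1/u}$) which the paper simply cites as a standard fact.
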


\begin{proof}
Take $u \in H^2$, with $u(z)=\sum_{k \geq 0} u_k z^k$ and consider the condition described in \eqref{eq:framemap}.
Then
\[
\sum_{n = 0}^{\infty} a_n S^n u = \sum_{n=0}^\infty a_n S^n \Big(\sum_{k=0}^\infty u_k z^k\Big) = f(z) u(z),
\]
where $f(z)=\sum_{k \geq 0} a_k z^k  \in H^2$. Since $(a_n)_{n \geq 0} \in \ell^2$ (the sequence of Taylor coefficients of an $H^2$ function belong to $\ell^2 (\N_{0})$),
the mapping
$$(a_n)_{n \geq 0} \mapsto \sum_{k=0}^\infty a_n S^n u$$
from $\ell^2(\N_{0})$ to $H^2$
is bounded and surjective precisely when $uH^2=H^2$. By standard facts about multipliers of $H^2$ \cite[p.~121]{MR4545809}, this holds if and only if $u$ and $1/u$ belong to $H^{\infty}$, the bounded analytic functions on $\D$. 
\end{proof}

If the sequence $(S^{n} u)_{n \geq 0}$ forms a frame, then it
even forms a Riesz basis, since with $u$ as in Proposition~\ref{prop:frameforshift}
the vectors are the image of the standard orthonormal basis $(z^n)_{n \geq 0}$ under
the bounded isomorphism $f \mapsto uf$.

It remains  for us to describe the frame vectors for $I-\mathcal{H}_1^*$. From our discussion from \S \ref{sectiosn333} we know that $M_\phi \mathcal{Q}=\mathcal{Q}(I-\mathcal{H}_1^*)$, where
$$\phi(\beta)=\frac{\beta-\frac{1}{2}}{\beta+\frac{1}{2}}$$ and $\mathcal{Q}: L^2(0, 1) \to H^{2}(\C^{+})$ is defined by
$$(\mathcal{Q} f)(z) = \int_{0}^{1} t^{z - \frac{1}{2}} f(t) dt.$$
 Recall that we also have $\mathcal{Q}=\Lap W$ as in \eqref{WWWWWW}.
We shall use the transform from $\D$ onto the right half plane $\C^{+}$ defined by
\[
z=\frac{\beta-\frac12}{\beta+\frac12}, \quad \beta=\frac{1}{2} \frac{1+z}{1-z},
\]
for $z \in \D$ and $\beta \in \C^+$.
As we saw in \eqref{eq:Jh2h2}, an
 isomorphism  $J:H^2 \to H^2(\C^+)$ is given by
\[
(J f) (\beta)= \frac{1}{\sqrt{2 \pi}} \frac{1}{\beta+\frac{1}{2}} f \Big( \frac{\beta-\frac{1}{2}}{\beta+\frac{1}{2}} \Big), \quad f \in H^2,  \quad \beta \in \C^+,
\]
with inverse
\[
(J^{-1}F) (z)= \sqrt{2 \pi}  \frac{1}{1-z} F \Big( \tfrac12 \Big( \frac{1+z}{1-z}\Big)\Big), \quad F \in H^2(\C^+), \quad z \in \D,
\]
and we note that $J^{-1}M_\phi J= M_z = S$, the unilateral shift on $H^2$.

This yields the following result.
\begin{Corollary}\label{sdfgdsfg}
A function $F \in H^2(\C^{+})$ is a frame vector for $M_\phi$ if and only if $u:=J^{-1}F$
satisfies $u,1/u \in H^\infty(\D)$.
\end{Corollary}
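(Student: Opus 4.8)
The plan is to reduce the statement to Proposition~\ref{prop:frameforshift} via the unitary $J$. The key observation is that $J^{-1} M_\phi J = S$ on $H^2$, so conjugation by $J$ carries the orbit structure of $M_\phi$ exactly onto that of $S$: if $u = J^{-1} F$, then $J^{-1} M_\phi^n F = S^n u$ for every $n \geq 0$. Since $J$ is a unitary (hence isometric isomorphism) from $H^2$ onto $H^2(\C^+)$, the frame inequality \eqref{eq:defframe} for the sequence $(M_\phi^n F)_{n \geq 0}$ in $H^2(\C^+)$ is equivalent, upon substituting $\vec{x} = J \vec{y}$ with $\vec{y}$ ranging over $H^2$, to the frame inequality
\[
c_1 \|\vec{y}\|^2 \leq \sum_{n=0}^\infty |\langle \vec{y}, S^n u\rangle|^2 \leq c_2 \|\vec{y}\|^2 \quad \text{for all } \vec{y} \in H^2,
\]
using $\langle J\vec{y}, M_\phi^n F\rangle_{H^2(\C^+)} = \langle \vec{y}, S^n u\rangle_{H^2}$. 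So $F$ is a frame vector for $M_\phi$ if and only if $u = J^{-1} F$ is a frame vector for $S$.

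Next I would invoke Proposition~\ref{prop:frameforshift}, which says $u$ is a frame vector for $S$ if and only if $u$ and $1/u$ lie in $H^\infty(\D)$. Chaining the two equivalences gives precisely the statement: $F \in H^2(\C^+)$ is a frame vector for $M_\phi$ if and only if $u := J^{-1} F$ satisfies $u, 1/u \in H^\infty(\D)$.

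There is essentially no obstacle here; the only point requiring a word of care is verifying that $J$ intertwines $M_\phi$ and $S$ in the stated direction so that orbits map to orbits — but this was already recorded in the excerpt ($J^{-1} M_\phi J = M_z = S$), so it can simply be cited. One might optionally add that in this case the frame is automatically a Riesz basis, by the same remark following Proposition~\ref{prop:frameforshift}, since $(M_\phi^n F)_{n\geq 0}$ is the image under the bounded isomorphism $J \circ (f \mapsto uf) \circ J^{-1}$ of the orthonormal basis $(u_n)_{n\geq 0}$ of $H^2_+(i\R)$ from \eqref{s09dfgioaVHFJKRFG}, but this is not needed for the statement as phrased.
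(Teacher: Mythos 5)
Your proposal is correct and follows essentially the same route as the paper: the paper states the corollary as an immediate consequence of the identity $J^{-1}M_\phi J = S$ together with Proposition~\ref{prop:frameforshift}, exactly the reduction you carry out. Your version merely makes explicit the transfer of the frame inequalities under the isometric isomorphism $J$, which is a fair (and correct) elaboration of what the paper leaves implicit.
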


\subsection{Frame vectors for $I - \mathcal{H}_{1}^{*}$} With Corollary \ref{sdfgdsfg} in hand, we can provide explicit examples of frame vectors for $I - \mathcal{H}_1^{*}$.

\begin{Example}\label{exmsxmxmapleee}
Consider the functions $t \mapsto t^\alpha$ which lie in  $L^2(0,1)$ when $\re \alpha> - \frac{1}{2}$. We claim 
they are frame vectors for $1- \mathcal{H}_1^{*}$.
As discussed earlier, they correspond  via $\mathcal{Q}$ to
$$F_\alpha(\beta) =  \frac{1}{\beta+\alpha+\frac{1}{2}} \in H^2(\C^+).$$
Now
\begin{align*}
J^{-1}F_\alpha(z) & = \frac{1}{1-z} \frac{1}{\frac12 \frac{1+z}{1-z} + \alpha + \frac12}\\
& = \frac{1}{\frac12(1+z)+(1-z)(\alpha+\frac12)}\\
&= \frac{1}{\alpha+1-\alpha z},
\end{align*}
which is an invertible function in $H^\infty$ since its zero $( \alpha + 1)/\alpha$
lies outside $\overline\D$ when $\re \alpha> -\frac{1}{2}$.
Particular examples are $\alpha=0$ and $\alpha=1$, giving
$(J^{-1}F_\alpha) (z)=1$ and $(2 - z)^{-1}$, respectively (which are both clearly invertible elements of $H^{\infty}$). 
\end{Example}

\begin{Example}\label{ex:mathica}
As a special case of the previous example, consider  the constant function $\chi \equiv 1$ on $[0, 1]$. This will correspond to the constant function $1$ on $\D$ (Example \ref{exmsxmxmapleee}). Thus, the sequence
$((I - \mathcal{H}_{1}^{*})^{n} \chi)_{n \geq 0}$ is a frame for $L^2(0, 1)$.
In fact
\begin{equation}\label{eq:IHn}
((I - \mathcal{H}_1^{*})^n \chi)(x) = L_{n}(-\log x), \quad n \geq 0,\quad 0 < x \leq 1,
\end{equation}
where $L_n$ is the $n$th Laguerre polynomial discussed earlier. Clearly this is true when $n=0$. Now, assuming the result for $n=k$ we have
\[
((I - \mathcal{H}_1^{*})^{k+1}  \chi)(x)=L_k(-\log x)-\int_x^1 \frac{L_k(-\log x)}{x} \, dx
\]
or, setting $w=-\log x$,
\[
((I - \mathcal{H}_1^{*})^{k+1} \chi)(e^{-w})=L_k(w)-\int_0^w L_k(w) \, dw=L_{k+1}(w)
\]
by an identity equivalent to \eqref{eq:lagshiftidentity}. Thus, we have \eqref{eq:IHn} by induction, and
%
% In fact, it is an orthonormal basis.  Using Mathematica, we get
%$$(I - \mathcal{H}^{*}) \chi = 1;$$
%$$(I - \mathcal{H}^{*}) \chi = \log x + 1;$$
%$$(I - \mathcal{H}^{*})^2 \chi = \frac{\log ^2(x)}{2}+2 \log (x)+1;$$
%$$(I - \mathcal{H}^{*})^3 \chi = \frac{\log ^3(x)}{6}+\frac{3 \log ^2(x)}{2}+3 \log (x)+1;$$
%$$(I - \mathcal{H}^{*})^4 = \frac{\log ^4(x)}{24}+\frac{2 \log ^3(x)}{3}+3 \log ^2(x)+4 \log (x)+1;$$
%and so on. Now look at the Laguerre polynomials
%$$1;$$
%$$1-x;$$
%$$\frac{1}{2} \left(x^2-4 x+2\right);$$
%$$\frac{1}{6} \left(-x^3+9 x^2-18 x+6\right);$$
%$$\frac{1}{24} \left(x^4-16 x^3+72 x^2-96 x+24\right);$$
%and so on.
%One can see the pattern (probably formally proved by induction) that
%$$(I - \mathcal{H}^{*})^n \chi = P_{n}(-\log x), \quad n \geq 0,$$
%where $P_n$ is the $n$th Laguerre polynomial. An
an integral substitution shows that the
functions $(L_n(-\log x))_{n \geq 0}$  are an orthonormal basis for $L^2(0, 1)$.
\end{Example}

\subsection{Lack of frame vectors for the discrete Ces\`{a}ro operator}

The discrete Ces\`{a}ro operator is defined on $H^2$ by
\[
(\mathcal{C}f) (z) = \frac{1}{z} \int_0^z \frac{f(\xi)}{1-\xi} \, d\xi, 
\]
or equivalently, via power series, by
$$\mathcal{C} \Big(\sum_{n = 0}^{\infty} a_n z^n\Big)  = a_0 + \frac{a_0 + a_1}{2}  z+ \frac{a_0 + a_1 + a_2}{3} z^2 + \cdots.$$
For more information about the discrete Ces\`{a}ro operator, see, for example \cite{MR4740839} and the references therein.
 Work of Kriete and Trutt 
\cite{MR281025,MR350489} produced a reproducing kernel Hilbert space $\mathfrak{H}$ of analytic functions on $\D$ that contains the polynomials as a dense set and such that 
$I - \mathcal{C}$ is unitarily equivalent to  $M_z$ (multiplication by $z$) on
$\mathfrak{H}$.

The same argument used to classify the frame vectors for the shift $S$ on  $H^2$ (Proposition \ref{prop:frameforshift}), along with the fact from  \cite[Thm.~4]{MR350489} that $H^2 \subset \mathfrak{H}$ with continuous inclusion,   says that $u \in \mathfrak{H}$ will be a frame vector for $M_z$ on $\mathfrak{H}$ if and only if $u H^2 = \mathfrak{H}$. This will never happen as can be seen by the following argument: If $u H^2 = \mathfrak{H}$, then, since the constant functions belong to $\mathfrak{H}$, it follows that the function  $u$ has no zeros in $\D$. This means that the zero sequences for $u H^2$ functions (sequences $A \subset \D$ for which there is an $f \in u H^2 \setminus \{0\}$ with $f|_A = 0$) must also be zero sequences for $H^2$. Classical Hardy space theory says that  the zero sequences for $H^2$ are the  so-called Blaschke sequences.  However, from  \cite[\S4]{MR350489} there are nonzero functions in $\mathfrak{H}$ that vanish on non-Blaschke sequences. Since there are no frame vectors for $M_{z}$ on $\mathfrak{H}$, there are no frame vectors for $I - \mathcal{C}$.

\subsection{Frame vectors and the commutant}

As it turns out from \cite[Proposition 3.8]{MR4060361}, one can, in a way, obtain {\em all} frame vectors from a single one. The general result for a bounded Hilbert space operator $T$ on $H$ is the following: If $\vec{x}_0$ is a particular  frame vector for $T$, then any frame vector $\vec{x}$ for $T$ must be of the form $V \vec{x}_0$, where $V$ is a bounded invertible operator on $H$ that also belongs to the commutant of $T$. We thank Victor Bailey for pointing this paper out  to us.

The commutant of the unilateral shift is $\psi(S)$, where $\psi \in H^{\infty}$ (which can also be written as $T_{\psi} f = \psi f$ \cite[p.~122]{MR4545809}) and $\psi(S)$ is invertible if and only if $1/\psi \in H^{\infty}$.
This means that the commutant of $I - \mathcal{H}_1^{*}$ is $\psi(I - \mathcal{H}_1^{*})$, where $\psi \in H^{\infty}$. Since we know that $\chi  \equiv 1$ is a frame vector for $I - \mathcal{H}_1^{*}$ (Example \ref{ex:mathica}) we have the following corollary.

\begin{Corollary}\label{generalframeHH}
An $f \in L^2(0, 1)$ is a frame vector for $I - \mathcal{H}_1^{*}$ if and only if there is a $\psi \in H^{\infty}$ with $1/\psi \in H^{\infty}$ such that $f = \psi(I - \mathcal{H}_1^{*}) \chi$.
\end{Corollary}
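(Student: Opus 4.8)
The plan is to combine the general frame-vector structure result quoted from \cite[Prop.~3.8]{MR4060361} with the explicit computation of the commutant of $I - \mathcal{H}_1^{*}$. First I would recall that Example \ref{ex:mathica} gives us a \emph{particular} frame vector, namely $\chi \equiv 1$ on $[0,1]$ (since under $\mathcal{Q}$, followed by $J^{-1}$, it corresponds to the constant function $1$ on $\D$, which trivially satisfies the condition $u, 1/u \in H^{\infty}(\D)$ of Proposition \ref{prop:frameforshift} and Corollary \ref{sdfgdsfg}). So a frame vector for $I - \mathcal{H}_1^{*}$ exists, and the cited structure theorem applies.

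Next I would invoke \cite[Prop.~3.8]{MR4060361} in the form stated in the excerpt: once $\vec{x}_0$ is a frame vector for a bounded operator $T$, the set of \emph{all} frame vectors for $T$ is exactly $\{V \vec{x}_0 : V \text{ bounded, invertible, and } V \in \{T\}'\}$. Taking $T = I - \mathcal{H}_1^{*}$ and $\vec{x}_0 = \chi$, it remains to identify the invertible elements of the commutant $\{I - \mathcal{H}_1^{*}\}'$. For this I would transport the problem to $H^2$: since $I - \mathcal{H}_1^{*}$ is unitarily equivalent (via $\mathcal{Q}$ composed with $J^{-1}$, as in Proposition \ref{98rgegbrfewrfvFF} and Corollary \ref{sdfgdsfg}) to the unilateral shift $S$ on $H^2$, its commutant is the image of the commutant of $S$, which by standard theory \cite[p.~122]{MR4545809} consists precisely of the analytic Toeplitz operators $T_{\psi} = \psi(S)$ with $\psi \in H^{\infty}$. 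Such an operator is invertible on $H^2$ if and only if $1/\psi \in H^{\infty}$ as well; pulling back, the invertible elements of $\{I - \mathcal{H}_1^{*}\}'$ are exactly the operators $\psi(I - \mathcal{H}_1^{*})$ with $\psi, 1/\psi \in H^{\infty}$.

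Putting these together: $f \in L^2(0,1)$ is a frame vector for $I - \mathcal{H}_1^{*}$ if and only if $f = V \chi$ for some invertible $V$ in the commutant, i.e., if and only if $f = \psi(I - \mathcal{H}_1^{*})\chi$ for some $\psi \in H^{\infty}$ with $1/\psi \in H^{\infty}$, which is the claim. I would remark that one should check the functional calculus $\psi(I - \mathcal{H}_1^{*})$ is well-defined (it is, since $I - \mathcal{H}_1^{*}$ is a contraction with spectrum $\overline{\D}$, being unitarily equivalent to $S$, so the $H^{\infty}$ functional calculus applies) and that it agrees with the transported $T_{\psi}$ under the unitary equivalence — a routine verification.

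I expect the only genuine subtlety to be bookkeeping around \emph{which} unitary equivalence is being used and ensuring the $H^{\infty}$ functional calculus $\psi \mapsto \psi(I - \mathcal{H}_1^{*})$ is the one corresponding to $\psi \mapsto T_{\psi}$ on $H^2$; once that identification is pinned down, the argument is essentially a direct substitution into the structure theorem. There is no hard analytic obstacle here, since the deep inputs — the frame structure theorem from \cite{MR4060361}, the description of the commutant of $S$, and the existence of the explicit frame vector $\chi$ — are already in hand.
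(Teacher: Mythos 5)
Your proposal is correct and follows essentially the same route as the paper: existence of the particular frame vector $\chi$ from Example \ref{ex:mathica}, the structure theorem of \cite[Prop.~3.8]{MR4060361}, and the identification of the invertible elements of the commutant of $I - \mathcal{H}_1^{*}$ with $\psi(I - \mathcal{H}_1^{*})$ for $\psi, 1/\psi \in H^{\infty}$ via the unitary equivalence with the shift. Your added remark pinning down the functional calculus is a reasonable bit of extra care that the paper leaves implicit.
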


Note that an expression for $\psi(I - \mathcal{H}_1^{*})$ can be obtained from
Proposition~\ref{98rgegbrfewrfvFF} and in particular
\eqref{eq:h1equiv}, which implies an explicit unitary equivalence with $M_{\psi \circ \phi}$
on $H^2_{+}(i\R)$.

%By the Kriete-Trutt theory \cite{MR281025,MR350489} and the work of Shields and Wallen \cite{MR287352} the commutant of $I - C$ is $\psi(I - C)$, where $\psi \in H^{\infty}$.
%Moreover, as discussed earlier, the invertible elements of the commutant are $\psi(I - C)$, where $\psi$ and $1/\psi$ belong to $H^{\infty}$. Since we know that $f_0 \equiv 1$ is a frame vector for $I - C$ we have the following corollary.

%\begin{Corollary}
%An $f \in H^2$ is a frame vector for $I - C$ if and only if there is a $\psi \in H^{\infty}$ with $1/\psi \in H^{\infty}$ such that $f = \psi(I - C) f_0$.
%\end{Corollary}

\bibliographystyle{plain}

\bibliography{references}

\end{document}